\theoremstyle{plain}
\newtheorem{theorem}{Theorem}[section]
\newtheorem{lemma}[theorem]{Lemma}
\theoremstyle{definition}
\theoremstyle{remark}
\newtheoremstyle{named}{}{}{\itshape}{}{\bfseries}{.}{.5em}{\thmnote{#3}#1}
\theoremstyle{named}
\title{The number of correct guesses with partial feedback}
\author{Zipei Nie \thanks{Lagrange Mathematics and Computing Research Center, Huawei. Email: niezipei@huawei.com.}}
\begin{document}

\maketitle
\begin{abstract}
We consider the following game. A deck with $m$ copies of each of $n$ distinct cards is shuffled in a perfectly random way. The Guesser sequentially guesses the cards from top to bottom. After each guess, the Guesser is informed whether the guess is correct. The goal is to maximize the expected number of correct guesses.

We prove that, if $n= \Omega(\sqrt{m})$, then at most $m+O(\sqrt{m})$ cards can be guessed correctly. Our result matches a lower bound of the maximal expected payoff by Diaconis, Graham and Spiro when $n=\Omega(m)$.
\end{abstract}
\section{Introduction}

For two integer parameters $m$ and $n$, the following game is called a card guessing game with partial feedback.

At the beginning, the Dealer holds a deck of $mn$ cards, which contains $m$ copies of each of $n$ distinct cards labeled $1,2,\ldots,n$. In each round, the Dealer draws a card uniformly at random from the deck, and the Guesser guesses this card, and then this card is discarded. After each guess, the Guesser is only told whether their guess is correct. The goal of the Guesser is to maximize the expected number of correct guesses after $mn$ rounds. Throughout the introduction, let $P(\mathcal{G})$ denote the number of correct guesses using a guessing strategy $\mathcal{G}$. 

This game has attracted much attention, as it provides multiple real-life applications. In 1957, Blackwell and Hodges \cite{blackwell1957design} (see also \cite{efron1971forcing}) used this game to model the clinic trials. In particular, they considered the case where $n=2$ and derived the optimal strategy. This special case is significantly easier than the general situation because the Guesser can determine the card type given the Yes/No feedback, that is, the game with partial feedback is equivalent to the game with complete feedback. In 2005, the same problem was analyzed independently by Ethier and Levin \cite{ethier2005fundamental} (see also \cite{ethier2010doctrine}) in the scenario of casino games. In 1978, Diaconis \cite{diaconis1978statistical} modeled some ESP (extrasensory perception) experiments as the card guessing game with partial feedback where $m=n$. In 1981, Diaconis and Graham \cite{diaconis1981analysis} studied this game systematically. They derived the optimal strategy for the case where $m=1$. 

Although this game has a long history, not much progress has been made until recently. Diaconis, Graham, He and Spiro proved the following result. In particular, they proved that the expected payoff has a constant upper bound when $m$ is a constant.   

\begin{theorem}\cite{diaconis2022card}\label{ref-theorem-1}
If $n$ is sufficiently large in terms of $m$, then
$$\mathbb{E}\left(P(\mathcal{G})\right)\le  m +c m^{3/4}\log^{1/4} m$$
for an absolute constant $c>0$.
\end{theorem}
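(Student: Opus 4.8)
Write $c_1,\dots,c_{mn}$ for the random sequence of dealt cards and $g_t$ for the Guesser's round-$t$ guess, so $P(\mathcal{G})=\sum_{t=1}^{mn}\mathds{1}[g_t=c_t]$ and $\mathbb{E}(P(\mathcal{G}))=\sum_{t=1}^{mn}\Pr[g_t=c_t]$. Let $\mathcal{F}_{t-1}$ be the information the Guesser has just before round $t$ (the past feedback bits, hence also the past guesses for a deterministic strategy; randomization does not help). Since $g_t$ is $\mathcal{F}_{t-1}$-measurable, for \emph{every} strategy
\[
\Pr[g_t=c_t]=\mathbb{E}\bigl(\Pr[c_t=g_t\mid\mathcal{F}_{t-1}]\bigr)\le\mathbb{E}(M_t),\qquad M_t:=\max_{1\le j\le n}\Pr[c_t=j\mid\mathcal{F}_{t-1}],
\]
so it suffices to bound $\sum_t\mathbb{E}(M_t)$. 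By exchangeability the marginal law of $c_t$ is uniform on $\{1,\dots,n\}$, whence $M_t\ge 1/n$ always and $\sum_t 1/n=m$; the problem is thus to bound the \emph{excess} $\sum_{t=1}^{mn}\bigl(\mathbb{E}(M_t)-\tfrac1n\bigr)$, a measure of the value of the partial feedback.

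Next I would pass to the conditional composition of the deck. Given $\mathcal{F}_{t-1}$, the card $c_t$ is uniform among the $mn-t+1$ undealt cards, so $\Pr[c_t=j\mid\mathcal{F}_{t-1}]=\mathbb{E}(R_j^{(t)}\mid\mathcal{F}_{t-1})/(mn-t+1)$, where $R_j^{(t)}$ counts the copies of card $j$ still in the deck, and hence $\mathbb{E}(M_t)=\mathbb{E}\bigl(\max_j\mathbb{E}(R_j^{(t)}\mid\mathcal{F}_{t-1})\bigr)/(mn-t+1)$. Replacing the conditional expectation $\mathbb{E}(R_j^{(t)}\mid\mathcal{F}_{t-1})$ by $R_j^{(t)}$ itself would give the bound one gets from \emph{complete} feedback, which for fixed $m$ grows like $m\log(mn)$ and is useless here; the whole game is to exploit that, with partial feedback and $n$ large, the Guesser cannot tell which types are over-represented among the undealt cards. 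Concretely, the feedback word seen so far has length $t-1$ with exactly $K_{t-1}$ ones ($K_{t-1}$ the running count of correct guesses); a correct guess on round $s$ pins the type of $c_s$ exactly, while an incorrect guess merely excludes one of $n$ candidate types. The technical heart of the proof is an estimate of the form
\[
\mathbb{E}\Bigl(\max_{j}\mathbb{E}\bigl(R_j^{(t)}\mid\mathcal{F}_{t-1}\bigr)\Bigr)\;\le\;\frac{mn-t+1}{n}+\mathrm{Err}(t),
\]
with $\mathrm{Err}(t)$ governed by $t$, $n$, and the expected number of correct guesses: the "confirmed" part of the feedback perturbs at most $K_{t-1}$ of the $n$ counts by a bounded total amount, while the "unconfirmed" part keeps each $R_j^{(t)}$ concentrated near its mean of order $t/n$ (excluding one of $n$ types being a negligible perturbation), and the maximum over $n$ types costs only a $\sqrt{\log n}$-type factor.

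Granting this, I would sum $\sum_t\mathrm{Err}(t)/(mn-t+1)$, which is dominated by the last rounds where the denominator is small: split the game at the final $L$ rounds, bound their contribution trivially by $L$, and bound rounds $t\le mn-L$ by the displayed estimate. Because $\mathrm{Err}$ involves the number of correct guesses, this gives a self-improving inequality $\mathbb{E}(P(\mathcal{G}))\le m+f(L,\mathbb{E}(P(\mathcal{G})),m,n)$; solving it and optimizing $L$ yields $m+c\,m^{3/4}\log^{1/4}m$, with the exponent $3/4$ and the factor $\log^{1/4}m$ emerging from balancing the endgame of size $L$ against the concentration error (with its $\sqrt{\log n}$ loss).

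The step I expect to be hardest is the displayed estimate. It must hold against a fully adaptive, adversarial Guesser, so no fixed guessing rule may be assumed, and it must be sharp enough to contribute only $O(m^{3/4}\log^{1/4}m)$ rather than the $O(m\log n)$ that complete-feedback reasoning gives — this is precisely where one genuinely uses the feedback being only partial. I expect the proof to require splitting $\mathcal{F}_{t-1}$ into its correct-guess and incorrect-guess parts and controlling a conditional second moment (or a martingale increment) of each $R_j^{(t)}$, uniformly over all feedback strings.
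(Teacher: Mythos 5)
This theorem is quoted from [DGHS22]; the present paper does not prove it, so I am judging your sketch against the cited proof and on its own terms. Your opening reduction is fine: for a deterministic (hence any) strategy, $\Pr[g_t=c_t]\le \mathbb{E}(M_t)$ with $M_t=\max_j\Pr[c_t=j\mid\mathcal{F}_{t-1}]=\max_j \mathbb{E}(R_j^{(t)}\mid\mathcal{F}_{t-1})/(mn-t+1)$, and the baseline $m$ comes from $M_t\ge 1/n$. But everything after that hinges on your displayed estimate $\mathbb{E}(\max_j\mathbb{E}(R_j^{(t)}\mid\mathcal{F}_{t-1}))\le\frac{mn-t+1}{n}+\mathrm{Err}(t)$, which you explicitly do not prove; that estimate \emph{is} the theorem, so what you have is an outline with the central step missing. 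More importantly, the known proof does not go through a maximum over all $n$ types at all: its lever is the structural bound of [DGHS22, Lemma 3.1] (restated as inequality (\ref{3.1}) in this paper), which controls $\mathbb{E}(y_t\mid g_{\le t},y_{\le t-1})$ purely in terms of the guess history of the \emph{currently guessed} type, namely $\frac{m-b(g_t,t)}{mn-a(g_t,t)-\sum_{i<t}y_i}$. That is precisely the quantitative expression of ``partial feedback tells you little about which types are over-represented,'' and your plan contains no counterpart to it.

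The specific shape of your $\mathrm{Err}(t)$ also looks fatal in the relevant regime. The theorem is for $n$ sufficiently large in terms of $m$, and the error $c\,m^{3/4}\log^{1/4}m$ is uniform in $n$; an error term carrying a $\sqrt{\log n}$ factor from the max over $n$ types diverges as $n\to\infty$ for fixed $m$, unless you also prove that the fluctuation scale of each $\mathbb{E}(R_j^{(t)}\mid\mathcal{F}_{t-1})$ decays with $n$ fast enough to compensate, uniformly over adaptive strategies --- exactly the step you acknowledge as hardest and for which no mechanism is offered. A max-over-types bound with a $\sqrt{\log}$ penalty is the signature of the \emph{complete}-feedback analysis (compare the $\sqrt{m\log m}$ excess of Ottolini--Steinerberger), so without an ingredient that quantitatively exploits the feedback being only one bit per round --- Lemma 3.1 of [DGHS22], or the coupling with the auxiliary $z$-sequence used in this paper --- the route you describe cannot separate the two regimes. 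Finally, the concluding ``self-improving inequality plus optimization of $L$'' yielding the exponents $3/4$ and $1/4$ is asserted rather than derived; as it stands there is no argument that the balancing produces an $n$-free bound of that order.
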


Later, Diaconis, Graham and Spiro proved the following lower bound of the maximal expected payoff.

\begin{theorem}\cite{diaconis2022guessing}\label{ref-theorem-2}
If $n\ge 8m$, then there exists a guessing strategy $\mathcal{G}$ with 
$$\mathbb{E}\left(P(\mathcal{G})\right) \ge m + \frac{1}{40} \sqrt{m}.$$
\end{theorem}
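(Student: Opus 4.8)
The plan is to exhibit a transparent two‑phase strategy and reduce its analysis to the fluctuations of a single hypergeometric random variable. Since $n\ge 8m\ge 2$, the card types $1$ and $2$ both exist. For $0\le t\le mn$ let $A_t$ be the number of correct guesses in the first $t$ rounds, and put
\[
D_t:=\frac{nA_t-t}{n-1}.
\]
Fix a level $L$, of order $\sqrt m$ (its exact value tuned at the end), and let $T$ be the first round $t$ with $D_t\ge L$, with $T:=mn$ if there is no such round. The strategy $\mathcal G$ is: name card $1$ in rounds $1,\dots,T$ and name card $2$ in rounds $T+1,\dots,mn$. After round $t$ the Guesser knows $A_t$, so $D_t$ is computable online and $T$ is a bona fide stopping time.

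The first step is the identity $\mathbb E\left(P(\mathcal G)\right)=m+\mathbb E(D_T)$. Since the Guesser names card $1$ throughout the first phase, a correct guess in round $s\le T$ means exactly that position $s$ of the deck holds a copy of card $1$; hence the Guesser's information $\mathcal F_T$ at time $T$ is precisely the value of $T$ together with the set of positions in $\{1,\dots,T\}$ holding copies of card $1$ (there are $A_T$ of them). In the second phase the Guesser names card $2$, contributing $m-B_T$ correct guesses, where $B_T$ is the number of copies of card $2$ among the first $T$ positions, so $P(\mathcal G)=A_T+(m-B_T)$. The law of the deck is invariant under permuting the labels $2,\dots,n$, and the event on which we condition refers only to the card‑$1$/not‑card‑$1$ pattern, so this symmetry survives the conditioning; consequently, among the $T-A_T$ positions in $\{1,\dots,T\}$ that are not card $1$, the conditional expected number of copies of any given type $j\in\{2,\dots,n\}$ is the same, hence equals $\tfrac{T-A_T}{n-1}$, giving $\mathbb E(B_T\mid\mathcal F_T)=\tfrac{T-A_T}{n-1}$. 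Taking expectations, $\mathbb E\left(P(\mathcal G)\right)=m+\mathbb E\!\left(A_T-\tfrac{T-A_T}{n-1}\right)=m+\mathbb E(D_T)$. Because $\mathbb E(A_t)=t/n$, we have $\mathbb E(D_t)=0$ at every fixed time, so the entire gain comes from stopping adaptively when $D_t$ has fluctuated upward.

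The second step is to show $\mathbb E(D_T)\ge\tfrac1{40}\sqrt m$. Along the deck $D_t$ rises by exactly $1$ at each copy of card $1$ and falls by $\tfrac1{n-1}$ at every other position, with $D_0=D_{mn}=0$; thus $D_T\ge L$ on the event $\bigl\{\max_{0\le t\le mn}D_t\ge L\bigr\}$ and $D_T=0$ otherwise, so
\[
\mathbb E(D_T)\ge L\cdot\Pr\!\Bigl(\max_{0\le t\le mn}D_t\ge L\Bigr)\ge L\cdot\Pr(D_{t^*}\ge L),
\]
where $t^*:=\lfloor mn/2\rfloor$. Now $A_{t^*}$ is hypergeometric with mean $t^*/n$ (close to $m/2$) and variance $\tfrac{t^*(mn-t^*)}{n\,mn}\cdot\tfrac{m(n-1)}{mn-1}$, which for $n\ge 8m$ is at least a positive constant times $m$, and $\{D_{t^*}\ge L\}=\bigl\{A_{t^*}\ge\tfrac{t^*}{n}+L(1-\tfrac1n)\bigr\}$. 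Choosing $L$ a suitable constant multiple of $\sqrt m$ and invoking a Berry--Esseen bound for sampling without replacement, $\Pr(D_{t^*}\ge L)$ is bounded below by an absolute constant; optimizing the multiple shows $\mathbb E(D_T)\ge\tfrac1{40}\sqrt m$ once $m$ exceeds an absolute constant. The remaining finitely many small $m$ (for which $\tfrac1{40}\sqrt m$ is itself a small constant) are handled by a direct estimate of $\mathbb E(D_T)$, using that as $n\to\infty$ it tends to a positive quantity built from $\mathbb E\bigl(\max_{1\le k\le m}(k-mU_{(k)})\bigr)$, where $U_{(1)}\le\cdots\le U_{(m)}$ are order statistics of i.i.d.\ uniforms.

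The reduction to $\mathbb E(D_T)$ is essentially bookkeeping, using only the relabeling symmetry among cards $2,\dots,n$ and the fact that naming card $1$ for an initial segment lets the Guesser track that type exactly. The real work, and the main obstacle, is the last step: one needs an anti‑concentration bound for $A_{t^*}$ with \emph{explicit} constants, uniform over all $m$ and all $n\ge 8m$, strong enough that after multiplying by $L$ of order $\sqrt m$ one still clears the prescribed $\tfrac1{40}$; Berry--Esseen handles large $m$ cleanly, while the small‑$m$ range is finite but must be checked separately. (A refinement not needed here would replace the single level $L$ by a decreasing barrier, or lower‑bound $\Pr(\max_t D_t\ge L)$ by a ballot/reflection estimate instead of by its value at the midpoint, to improve the constant.)
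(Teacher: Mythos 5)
This statement is Theorem~\ref{ref-theorem-2}, which the paper does not prove at all: it is quoted from Diaconis--Graham--Spiro \cite{diaconis2022guessing} as the lower bound that motivates the upper bound proved here. So there is no in-paper proof to compare against, and your proposal has to be judged on its own terms as an independent argument.

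On those terms, the structural part of your argument is sound and, in my view, correct: the single-switch strategy with the adaptive stopping time $T$, the observation that the feedback in phase one reveals exactly the card-$1$ pattern, the label-exchangeability argument giving $\mathbb{E}(B_T\mid\mathcal{F}_T)=\frac{T-A_T}{n-1}$, and hence the identity $\mathbb{E}(P(\mathcal{G}))=m+\mathbb{E}(D_T)$ together with $\mathbb{E}(D_T)\ge L\,\Pr(\max_t D_t\ge L)$ are all fine (including the boundary case $T=mn$). The genuine gap is exactly where you flag it, and it matters because the entire content of the theorem is the explicit constant $\frac{1}{40}$ under the explicit hypothesis $n\ge 8m$: you never actually produce an anti-concentration bound $\Pr\bigl(A_{t^*}\ge \frac{t^*}{n}+L(1-\frac1n)\bigr)\ge p$ with numerical $L,p$ valid for \emph{all} $m\ge 1$ and all $n\ge 8m$. ``Invoking a Berry--Esseen bound'' needs a version for sampling without replacement with an explicit constant, and its error term, of order $m^{-1/2}$, must be beaten uniformly before multiplying by $L$ of order $\sqrt m$. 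Worse, your treatment of small $m$ is not valid as stated: for each fixed small $m$ there are still infinitely many admissible $n\ge 8m$, so ``the small-$m$ range is finite'' is false as a case analysis, and a limit statement as $n\to\infty$ does not yield the bound for every finite $n$ without a uniformity or monotonicity argument that you do not supply. These are repairable (explicit hypergeometric tail lower bounds exist, and small cases can be handled uniformly in $n$ with more care, as your own $m=1$ heuristic suggests), but as written the proposal establishes $m+c\sqrt m$ for large $m$ and some unspecified $c>0$, not the stated theorem.
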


Thus it is of interest (see \cite[Conjecture 4.3]{diaconis2022card} and \cite[Conjecture 3]{diaconis2022guessing}) to close the gap between Theorem \ref{ref-theorem-1} and Theorem \ref{ref-theorem-2}. This is the purpose of our paper. We are going to prove the following result.

\begin{theorem}\label{main-thm}
If $n\ge 1200\sqrt{m}$, then
$$\mathbb{E}\left(P(\mathcal{G})\right)\le  m +500\sqrt{m}.$$
\end{theorem}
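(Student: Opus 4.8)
The plan is to track the Guesser's state by the multiset of remaining card counts, and to bound the per-round gain over the trivial ``always guess a most-frequent remaining card'' baseline. Write the deck state after $t$ cards have been drawn as a vector of remaining multiplicities $(a_1,\dots,a_n)$ with $\sum a_i = mn-t$. The key quantity is the ``excess'' probability: the chance that the next card is guessed correctly, minus the baseline $\frac{\max_i a_i}{\sum_i a_i}$. Summing $\frac{\max_i a_i}{\sum_i a_i}$ over all rounds of a random draw sequence already contributes roughly $m$ to the expected payoff (this is essentially the $m=1$-per-color bookkeeping: each color contributes $\sum_{k=1}^{m} \frac1{(\text{cards remaining when its count is }k)}$, which telescopes in expectation to about $m + O(\log)$ when $n$ is large). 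So it suffices to show the total expected excess is $O(\sqrt m)$.

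The heart of the argument is that partial feedback is \emph{weak}: when the Guesser guesses a card of type $i$ and is told ``no'', they only learn $a_i$ is not the unique type drawn — a $O(a_i/(\sum a_j))$-probability event — so the posterior stays close to the ``all permutations consistent with the Yes/No transcript'' distribution, which is nearly uniform. I would first reduce to the case $n = \Theta(\sqrt m)$ (monotonicity: making $n$ larger with $m$ fixed can only help the Guesser per the statement's direction, but actually one wants the reverse — so instead fix $n \ge 1200\sqrt m$ and note the bound is weakest, i.e. hardest, precisely there; handle all $n$ uniformly). Then, following the strategy of Diaconis–Graham–He–Spiro but sharpening it, I would set up a potential/martingale argument: let $X_t$ be the number of ``incomplete'' card types (those with $1 \le a_i < $ something) at time $t$, or more precisely control $\mathbb E[\max_i a_i - \overline{a}]$ where $\overline a$ is the average multiplicity. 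The excess gain in a round is at most the probability that the Guesser can identify a type known to be over-represented, and a second-moment / concentration estimate on the $a_i$ (they behave like a random partition of $mn-t$ into $n$ parts, fluctuations $O(\sqrt m)$ per part once $t$ is not too close to $mn$) should give that the summed excess is $O(\sqrt m)$, with the endgame $t \to mn$ contributing another $O(\sqrt m)$ crudely.

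Concretely the steps are: (1) formalize the state space and the ``consistent-permutations'' posterior, and prove the posterior is within total-variation $o(1)$ of uniform-given-transcript throughout, since each ``no'' is low-information; (2) decompose $\mathbb E[P(\mathcal G)] = \sum_t \mathbb E[\Pr(\text{correct at }t)]$ and split each term into baseline $+$ excess; (3) show $\sum_t \mathbb E[\text{baseline}] \le m + O(\sqrt m)$ by the telescoping-harmonic-sum computation, using $n \ge 1200\sqrt m$ to kill the $\log$ term (here $n$ large means each color's $\sum 1/(\text{remaining})$ is close to $m/(mn) \cdot mn = $ ... a clean $\le m + O(m^2/n) = m + O(\sqrt m)$); (4) bound $\sum_t \mathbb E[\text{excess}]$ by $O(\sqrt m)$ via concentration of the multiplicity vector plus the weak-feedback estimate from (1). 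I expect step (4), and within it the endgame regime where $mn - t = O(m)$, to be the main obstacle: there the multiplicities are few and the Guesser's information is proportionally richest, so the naive ``excess per round'' bound degrades and one needs a more careful argument — perhaps a direct coupling showing that even with full feedback the last $O(m)$ rounds yield only $O(\sqrt m)$ over baseline, combined with the observation that full feedback dominates partial feedback.
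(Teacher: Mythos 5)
There is a genuine gap, and it is at the core of your plan rather than in the details. Your decomposition takes as ``baseline'' the per-round probability $\max_i a_i/\sum_i a_i$ computed from the true remaining multiplicities; summing its expectation over all rounds is exactly the expected payoff of the greedy strategy \emph{with complete feedback}. That quantity is not $m+O(\sqrt m)$: for $m=n$ it equals $m+\left(\frac{\pi}{\sqrt 2}+o(1)\right)\sqrt{m\log m}$ (the Ottolini--Steinerberger result quoted in the paper precisely to make this contrast), and for fixed $m$ it grows with $n$ (already for $m=1$ the greedy complete-feedback value is $H_n\sim\log n$). So your step (3), the claim that the baseline telescopes to $m+O(m^2/n)$, is false, and the decomposition ``baseline $+$ excess'' cannot yield the bound $m+500\sqrt m$ even if the excess were $0$. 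The same problem defeats your proposed endgame fallback ``full feedback dominates partial feedback'': the whole content of the theorem is that partial feedback is strictly weaker than full feedback, so any bound routed through the full-feedback value overshoots the target. Step (1) is also not a workable substitute: a per-round ``each no is low-information, so the posterior is within $o(1)$ of uniform'' statement has no quantitative rate and does not control the Guesser's ability to accumulate information about a \emph{specific} color by guessing it repeatedly, which is exactly what must be bounded.

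For comparison, the paper never analyzes posteriors or a full-feedback baseline. It starts from the DGHS inequality $\mathbb{E}\left(y_t \mid g_{\le t}, y_{\le t-1}\right)\le \frac{m-b(g_t,t)}{mn-a(g_t,t)-\sum_{i<t}y_i}$, which encodes the only information the transcript can give through the per-color counts $a(k,t)$, $b(k,t)$; it then couples $(y_t)$ to an auxiliary Boolean process $(z_t)$ satisfying this bound with equality (with $\sum_{i<t}y_i$ frozen at a threshold $Y$), controls $\sum_i z_i$ by a supermartingale built from a piecewise-quadratic potential in $c(k,t)-a(k,t)/n$ together with optional stopping, and finally transfers the bound back to $\sum_i y_i$ on the high-probability event $\sum_i y_i\le Y$. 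Note also that the hypothesis $n\ge 1200\sqrt m$ is used there to make the loss in the DGHS lemmas negligible, not to kill a logarithm in a harmonic sum as in your step (3). To salvage your outline you would have to replace the baseline by the trivial value $m$ (e.g.\ the count-based bound above) and find a quantitative mechanism for the excess; as written, the argument cannot be completed.
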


We conjecture that the assumption $n\ge 1200\sqrt{m}$ could be dropped. The main reason we need this assumption in our proof is that, we take \cite[Lemma 3.1]{diaconis2022card} as the starting point of our proof. Intuitively, this step costs a multiplicative factor of $1+\frac{1}{n}$, which is only negligible when $n$ is at least of the order of $\sqrt{m}$. 

Note that the condition in our main result is satisfied in the case where $m=n$ and $m\to \infty$. This special case is of particular interest because of its connection with Zener cards \cite{diaconis1978statistical}. One may compare our result with the following result by Ottolini and Stefan for card guessing game with complete feedback, and conclude that the maximal expected payoff in the latter case is greater. 

\begin{theorem}\cite{ottolini2022guessing}
If $m=n$ and $m\to \infty$, then the card guessing game with complete feedback has maximal expected payoff
$$ m+\left(\frac{\pi}{\sqrt{2}}+o(1)\right)\sqrt{m\log m}.$$
\end{theorem}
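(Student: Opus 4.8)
The plan is to analyze the \emph{greedy} strategy, which for the complete–feedback game is optimal: at each round one guesses a card type with the largest number of remaining copies. Optimality of greedy is a standard fact (it follows by backward induction together with a majorization/Schur-concavity argument for the value function), so it suffices to compute the exact expected payoff of greedy, which then serves simultaneously as the upper and the lower bound. The key simplification is that in the complete–feedback game the remaining deck after $t-1$ draws is a uniformly random $(N-t+1)$-subset of the original $N=m^2$ cards, and the number $S_t=N-t+1$ of cards present when the $t$-th guess is made is \emph{deterministic}. Writing $M(s)$ for the maximum, over the $n=m$ types, of the number of copies present in a uniformly random $s$-subset of the deck, the conditional probability of a correct greedy guess when $s$ cards remain is $M(s)/s$, so by linearity
\begin{equation*}
\mathbb{E}\bigl(P(\mathcal{G}_{\mathrm{greedy}})\bigr)=\sum_{s=1}^{N}\frac{\mathbb{E}[M(s)]}{s}.
\end{equation*}

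First I would peel off the main term. The $n$ counts $X_1(s),\dots,X_n(s)$ in a random $s$-subset are multivariate hypergeometric with common mean $s/n$, so writing $M(s)=s/n+(M(s)-s/n)$ gives $\sum_{s=1}^{N}\frac{s/n}{s}=\frac{N}{n}=m$ exactly. It remains to show that the excess contributes the fluctuation term, i.e.
\begin{equation*}
\sum_{s=1}^{N}\frac{\mathbb{E}\bigl[M(s)-s/n\bigr]}{s}=\Bigl(\tfrac{\pi}{\sqrt2}+o(1)\Bigr)\sqrt{m\log m}.
\end{equation*}
Each count $X_i(s)$ is hypergeometric with variance $\sigma(s)^2$, and for $s=\alpha N$ one computes $\sigma(s)^2=(1+o(1))\,\alpha(1-\alpha)\,m$ uniformly on compact subsets of $\alpha\in(0,1)$. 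The heuristic is then that $M(s)-s/n$ is the maximum of $n=m$ nearly-Gaussian deviations of standard deviation $\sigma(s)$, hence of size $(1+o(1))\,\sigma(s)\sqrt{2\log n}=(1+o(1))\sqrt{2\alpha(1-\alpha)m\log m}$.

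The heart of the argument, and the main obstacle, is to turn this heuristic into two matching bounds on $\mathbb{E}[M(s)-s/n]$ that are uniform enough in $s$ to be summed. For the upper bound I would use a moderate-deviation tail estimate for the hypergeometric distribution — a Bernstein/Chernoff bound — together with a union bound over the $n$ types; the point to verify is that the relevant deviations are of order $x\,\sigma(s)$ with $x=\Theta(\sqrt{\log m})$, which lies in the moderate-deviation regime $x=o(\sigma(s)^{1/3})$ where the Gaussian tail $e^{-x^2/2}$ is accurate, so the union bound yields $\mathbb{E}[M(s)-s/n]\le(1+o(1))\,\sigma(s)\sqrt{2\log n}$. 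For the matching lower bound I would establish, via a local central limit theorem for the hypergeometric in the moderate-deviation range together with a second-moment (or Poissonization) argument, that with high probability at least one type exceeds its mean by $(1-o(1))\,\sigma(s)\sqrt{2\log n}$; here the negative correlation among the $X_i(s)$ must be shown not to depress the maximum below the independent-case asymptotics. The edge regimes $\alpha\to0$ and $\alpha\to1$, where the Gaussian and extreme-value approximations degrade, I would treat separately by crude bounds, checking that they contribute only $o(\sqrt{m\log m})$.

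Finally, granting the uniform estimate $\mathbb{E}[M(s)-s/n]=(1+o(1))\sqrt{2\alpha(1-\alpha)m\log m}$, I would convert the sum to an integral via $ds=N\,d\alpha$:
\begin{equation*}
\sum_{s=1}^{N}\frac{\mathbb{E}[M(s)-s/n]}{s}=(1+o(1))\sqrt{2m\log m}\int_{0}^{1}\frac{\sqrt{\alpha(1-\alpha)}}{\alpha}\,d\alpha.
\end{equation*}
The integral is the Beta integral $\int_{0}^{1}\alpha^{-1/2}(1-\alpha)^{1/2}\,d\alpha=\mathrm{B}\bigl(\tfrac12,\tfrac32\bigr)=\frac{\Gamma(1/2)\Gamma(3/2)}{\Gamma(2)}=\frac{\pi}{2}$, which is finite precisely because the $\alpha^{-1/2}$ singularity at the lower endpoint is integrable (so the small-$s$ tail of the game is indeed negligible to leading order). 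Hence the excess equals $(1+o(1))\sqrt{2m\log m}\cdot\frac{\pi}{2}=\bigl(\frac{\pi}{\sqrt2}+o(1)\bigr)\sqrt{m\log m}$, and combining with the main term $m$ gives $\mathbb{E}(P)=m+\bigl(\frac{\pi}{\sqrt2}+o(1)\bigr)\sqrt{m\log m}$ as claimed.
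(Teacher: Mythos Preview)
The paper does not give a proof of this theorem at all: it is quoted, with citation to Ottolini and Steinerberger, purely for comparison with the partial-feedback result. So there is no ``paper's own proof'' to compare against.

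As for your outline itself, the skeleton is the right one and the arithmetic checks: greedy is optimal under complete feedback, the remaining deck at round $t$ is a uniform $s$-subset with $s=N-t+1$, whence $\mathbb{E}[P]=\sum_{s=1}^{N}\mathbb{E}[M(s)]/s$; splitting off $s/n$ gives the exact main term $m$, and the heuristic $\mathbb{E}[M(s)-s/n]\sim\sigma(s)\sqrt{2\log n}$ together with $\sigma(s)^2\sim\alpha(1-\alpha)m$ and $\int_0^1\alpha^{-1/2}(1-\alpha)^{1/2}\,d\alpha=\mathrm{B}(\tfrac12,\tfrac32)=\pi/2$ produces the constant $\pi/\sqrt2$. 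That said, what you have written is a plan, not a proof. The two places where real work is hidden are: (i) the \emph{uniform} moderate-deviation upper bound --- you need the Gaussian tail approximation for the hypergeometric to hold with a $1+o(1)$ exponent \emph{uniformly} over the relevant range of $s$, not just pointwise, so that the union bound yields the sharp leading constant; and (ii) the matching lower bound, where the $X_i(s)$ are negatively correlated and a naive second-moment argument does not immediately deliver the independent-case extremal asymptotics --- one has to control the pair probabilities $\mathbb{P}(X_i,X_j\text{ both large})$ sharply enough, or pass through a Poissonization/de-Poissonization step. The edge regimes are, as you say, harmless (for small $s$ the trivial bound $M(s)\le s$ already gives a negligible contribution), but (i) and (ii) are where the substance lies and would need to be written out to constitute a proof.
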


\section*{Acknowledgments}
The author thanks Sam Spiro for helpful conversations.

\section{Proof of Theorem \ref{main-thm}}
\subsection{Modified payoff vector $(z_1,\ldots, z_{mn})$}
We fix a guessing strategy $\mathcal{G}$ and we assume $n\ge 1200\sqrt{m}$. Let $\left(g_1,\ldots, g_{mn}\right)$ denote the $[n]$-valued vector such that the Guesser guesses $g_t$ in the $t$-th round. Let $(y_1,\ldots, y_{mn})$ denote the Boolean vector whose coordinate $y_t$ is the indicator function of correct guess in the $t$-th round. Our goal is to give an upper bound on the expectation of $\sum_{i=1}^{mn} y_i$.

Throughout the section, let $v_{\le t}$ represent the truncation $(v_1,\ldots, v_{t})$ of an $mn$-dimensional vector $(v_1,\ldots, v_{mn})$ to the first $t$ values for the sake of conciseness.

For every integer $1\le k \le n$ and every integer $1\le t \le mn+1$, let $$a(k,t):=\left|\left\{ 1\le i < t : g_i = k\right\}\right|$$
be the number of times the label $k$ is guessed in the first $t-1$ rounds, and $$b(k,t):=\left|\left\{ 1\le i < t : g_i = k, y_i=1\right\}\right|$$ of them are correct. Then by definition we have \begin{equation}\label{sum}
    \sum_{k=1}^n a(k,t)= t-1
\end{equation}
and
\begin{equation}\label{sum_y}
    \sum_{k=1}^n b(k,t)= \sum_{i=1}^{t-1} y_i.
\end{equation} Because there are only $m$ cards labeled $k$, we have 
\begin{equation}\label{bound_m}
    b(k,t)\le  m.
\end{equation} 
By \cite[Lemma 3.1]{diaconis2022card}, we have \begin{equation}\label{3.1}
    \mathbb{E}\left(y_{t}\; \middle|\;  g_{\le t}, y_{\le t-1}\right)\le \frac{m-b(g_t, t)}{mn-a(g_t, t)-\sum_{i=1}^{t-1} y_i}
\end{equation}
if $a(g_t,t)<mn -\sum_{i=1}^{t-1} y_i$. 

By \cite[Lemma 3.8]{diaconis2022card}, the term $\sum_{i=1}^{t-1} y_i$ is at most $\left \lfloor \frac{1}{6}\sqrt{m} n\right \rfloor$ with high probability. For convenience, let $$Y:=\left \lfloor \frac{1}{6}\sqrt{m} n\right \rfloor$$ denote this integer for the rest of the proof.

One technical difficulty is that (\ref{3.1}) is only a one-sided inequality. We are going to use the coupling method to construct a Boolean vector $(z_1,\ldots, z_{mn})$ with an equality version of (\ref{3.1}). In the new version of (\ref{3.1}), the $\sum_{i=1}^{t-1} y_i$ part is replaced by $Y$. We are going to compare the vectors $(y_1,\ldots, y_{mn})$ and $(z_1,\ldots, z_{mn})$ in Subsection \ref{section-compare-yz}.
\begin{lemma}\label{z-construction}
    There exists a Boolean random vector $(z_1,\ldots, z_{mn})$ with the following properties. For every integer $1\le k\le n$ and every integer $1\le t\le mn+1$, define $c(k,t)$ by $$c(k,t):=\left|\left\{ 1\le i < t : g_i = k, z_i=1\right\}\right|.$$ Almost surely we have that
    \begin{enumerate}[label=(\alph*)]
        \item \label{property-a} for each $1\le k\le n$ and each $1\le t\le mn+1$, $$m- \max\left\{mn-a(k,t)-Y, 0 \right\} \le c(k,t)\le m;$$
        \item \label{property-b}for each $1\le t\le mn$, conditioned on $g_{\le t}$ and $y_{\le t}$, the coordinates of $z_{\le t}$ are mutually independent, and are independent from the vectors $g_{\le mn}$ and $y_{\le mn}$;
        \item \label{property-c} for each $1\le t\le mn$, if $a(g_t,t)< mn-Y$, then $$\mathbb{E}\left(z_t\;\middle|\; g_{\le t}, z_{\le t-1}\right) = \frac{m-c(g_t,t)}{mn-a(g_t,t)-Y};$$
        \item \label{property-d} for each $1\le t\le mn$, if $\mathbb{E}\left(y_t\;\middle|\; g_{\le t}, y_{\le t-1}\right) \le \mathbb{E}\left(z_t\;\middle|\; g_{\le t}, z_{\le t-1}\right),$ then $y_t\le z_t$.
    \end{enumerate}
\end{lemma}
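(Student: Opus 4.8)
The plan is to build $(z_1,\ldots,z_{mn})$ round by round together with the given process $(g_t,y_t)$, using an explicit coupling at each step. Suppose we have already constructed $z_{\le t-1}$ satisfying all four properties up to time $t-1$. Condition on $g_{\le t}$ and $y_{\le t}$; this determines the quantities $a(g_t,t)$, $b(g_t,t)$ and, given $z_{\le t-1}$, also $c(g_t,t)$. We want to define $z_t$ as a Bernoulli variable, independent of everything in the past except through $g_{\le t}$ and $z_{\le t-1}$, with the prescribed conditional mean from \ref{property-c}. Concretely, when $a(g_t,t)<mn-Y$ set
$$
p_t := \frac{m-c(g_t,t)}{mn-a(g_t,t)-Y},
$$
and when $a(g_t,t)\ge mn-Y$ set $p_t$ to be any value (say $0$, or whatever is forced by property \ref{property-a}; see below). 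First I would check $0\le p_t\le 1$: the numerator is nonnegative once we know $c(g_t,t)\le m$ (the upper bound in \ref{property-a}, which we will have maintained), and $p_t\le 1$ is equivalent to $m-c(g_t,t)\le mn-a(g_t,t)-Y$, i.e.\ $c(g_t,t)\ge m-(mn-a(g_t,t)-Y)$, which is exactly the lower bound in \ref{property-a} at time $t$ for the label $k=g_t$ — so this is consistent provided we can propagate \ref{property-a}.

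To get \ref{property-d} I would realize both $y_t$ and $z_t$ on the same probability space by a \emph{monotone coupling}: draw a single uniform random variable $U_t\sim\mathrm{Unif}[0,1]$, independent of $(g_{\le mn},y_{\le mn},z_{\le t-1})$, and set $z_t := \mathds{1}[U_t \le p_t]$. Note $y_t$ is already determined by $(g_{\le t},y_{\le t})$, hence measurable with respect to the conditioning; but to compare it with $z_t$ we instead re-couple, observing that it suffices to \emph{construct} $y$ and $z$ jointly so that both have their correct conditional laws and $U_t$ drives $z_t$ while a matching threshold drives $y_t$. More carefully: by \ref{property-b} we only need the conditional law of $z_{\le t}$ given $(g_{\le mn},y_{\le mn})$, so we may first expose all of $g$ and $y$, then generate the $z_t$ sequentially. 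The point of \ref{property-d} is a statement purely about conditional expectations, so it is automatic from the threshold construction: if $\mathbb{E}(y_t\mid g_{\le t},y_{\le t-1})\le p_t=\mathbb{E}(z_t\mid g_{\le t},z_{\le t-1})$, then writing $y_t=\mathds{1}[U_t\le q_t]$ for the appropriate threshold $q_t\le p_t$ forced by \eqref{3.1}-type reasoning gives $y_t\le z_t$ pointwise. I would spell out that the same $U_t$ can legitimately be used because $z_t$ is allowed to depend on $y_{\le t}$ (which includes $y_t$) per \ref{property-b}.

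The main obstacle, and the step I would spend the most care on, is propagating the two-sided bound \ref{property-a} for \emph{all} labels $k$ simultaneously, not just $k=g_t$. For $k\ne g_t$ nothing changes ($a(k,\cdot)$ and $c(k,\cdot)$ are unchanged), so the induction is trivial there. For $k=g_t$: the upper bound $c(g_t,t+1)\le m$ needs $c(g_t,t)<m$ whenever $z_t=1$, which holds because $p_t=0$ once $c(g_t,t)=m$; and the lower bound $c(g_t,t+1)\ge m-\max\{mn-a(g_t,t+1)-Y,0\}$ must be checked in two regimes. When $a(g_t,t+1)\ge mn-Y$ the bound reads $c\ge 0$, trivial. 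When $a(g_t,t+1)<mn-Y$ (so also $a(g_t,t)<mn-Y$ and we are in the nontrivial branch), the bound at time $t$ gives $c(g_t,t)\ge m-(mn-a(g_t,t)-Y)$, and we need it to improve by exactly $1$ when we append the guess; this forces $z_t=1$ whenever $c(g_t,t)$ sits at its lower threshold, i.e.\ whenever $p_t=1$, which is precisely the case $c(g_t,t)=m-(mn-a(g_t,t)-Y)$ identified above. So the threshold construction automatically repairs the lower bound. I would also handle the degenerate branch $a(g_t,t)\ge mn-Y$ (define $z_t$ with mean chosen to keep \ref{property-a}, e.g.\ deterministically push $c$ up toward $m$ if it ever lags — though in fact $a(g_t,t)\ge mn-Y$ forces the lower bound to be vacuous, so any choice, say $z_t=0$, works), and verify the base case $t=1$ where $a=b=c=0$ and \ref{property-a} reads $0\le 0\le m$ plus $m-\max\{mn-Y,0\}\le 0$, valid since $mn-Y\ge mn - \frac16\sqrt m\,n\ge m$ under $n\ge 1200\sqrt m$. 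Finally, \ref{property-b} is built into the construction: each $z_t$ is a deterministic function of $U_t$ and the exposed past, with the $U_t$ i.i.d.\ uniform and independent of $(g,y)$, so conditionally on $(g_{\le t},y_{\le t})$ the coordinates $z_1,\ldots,z_t$ are independent of each other and of the full $(g,y)$ future.
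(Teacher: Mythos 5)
Your construction is essentially the paper's: the explicit conditional law the paper assigns to $z_t$ given $(g_{\le t},y_{\le t},z_{\le t-1})$ is exactly the common-uniform monotone coupling you describe (when $p_t\ge q_t:=\mathbb{E}(y_t\mid g_{\le t},y_{\le t-1})$ it puts $z_t=1$ on $\{y_t=1\}$ and mass $(p_t-q_t)/(1-q_t)$ on $\{y_t=0\}$; when $p_t<q_t$ it puts mass $p_t/q_t$ on $\{y_t=1\}$ and $0$ otherwise), and your checks of $0\le p_t\le 1$, of (c) via the tower property, and of (d) via monotone thresholds track the paper's argument. A minor difference: the paper keeps $(g,y)$ as given and only prescribes the conditional law of $z_t$, whereas you re-generate $y_t$ by thresholding the same uniform; that re-coupling is admissible here because only the joint law of $(g,y,z)$ matters downstream, but it is the step you gloss over, and the paper's formulation avoids it.

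The genuine flaw is in your propagation of property (a) in the regime $a(g_t,t+1)\ge mn-Y$. There $\max\{mn-a-Y,0\}=0$, so the stated lower bound is $c\ge m$, which together with the upper bound forces $c=m$; it is not ``$c\ge 0$, trivial''. For the same reason, in the degenerate branch $a(g_t,t)\ge mn-Y$ the constraint is not vacuous, so ``any choice, say $z_t=0$, works'' is false as a justification: the inductive hypothesis forces $c(g_t,t)=m$ there, and choosing $z_t=1$ would give $c(g_t,t+1)=m+1$, violating the upper bound, so $z_t=0$ is the only admissible choice. Moreover the crossing step $a(g_t,t)=mn-Y-1$, $a(g_t,t+1)=mn-Y$ falls outside your case ``$a(g_t,t+1)<mn-Y$'' and is dismissed by the incorrect ``trivial'' case; it needs the same boundary mechanism you use elsewhere: the hypothesis at time $t$ gives $c(g_t,t)\ge m-1$, and if $c(g_t,t)=m-1$ then $p_t=1$ forces $z_t=1$, so $c(g_t,t+1)=m$ as required. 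With this repaired case analysis your induction closes and coincides with the paper's proof.
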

\begin{proof} 
    We construct this random vector inductively. Because $n\ge 1200\sqrt{m}$, the condition \ref{property-a} holds for $t=1$. Suppose that $z_{\le t-1}$ has been defined in the first $t-1$ rounds with the conditions \ref{property-a}, \ref{property-b}, \ref{property-c} and \ref{property-d} for $t-1$. Additionally, assume that \ref{property-a} holds for $t$, because this is also a property of $z_{\le t-1}$. 
    
    In the $t$-th round, after the Guesser makes the guess $g_t$, we compare $a(g_t, t)$ with $mn-Y$. If $a(g_t,t)\ge mn-Y$, then we take $z_t=0$. Otherwise, define a real number $p_t$ as $$p_t:=\frac{m-c(g_t,t)}{mn-a(g_t,t)-Y}.$$ Because \ref{property-a} holds for $t$, we have $0\le p_t\le1$. Thus, we can define $z_t$ as a new Bernoulli random variable with 
    $$\mathbb{E}\left(z_t\;\middle|\; g_{\le t}, y_{\le t},z_{\le t-1}\right)=\begin{cases}
    \frac{(1-p_t) y_t+p_t-\mathbb{E}\left(y_t\;\middle| \; g_{\le t}, y_{\le t-1}\right)}{1-\mathbb{E}\left(y_t\;\middle| \; g_{\le t}, y_{\le t-1}\right)} &\mbox{, if } p_t \ge\mathbb{E}\left(y_t\;\middle| \; g_{\le t}, y_{\le t-1}\right),\\
    \frac{p_t y_t}{\mathbb{E}\left(y_t\;\middle| \; g_{\le t}, y_{\le t-1}\right)}& \mbox{, if }p_t < \mathbb{E}\left(y_t\;\middle| \; g_{\le t}, y_{\le t-1}\right).
\end{cases}$$
Hence \ref{property-b} holds for $t$. 

Because of the conditional independence \ref{property-b}, we have
$$\mathbb{E}\left(y_t\;\middle| \; g_{\le t}, y_{\le t-1}\right) =\mathbb{E}\left(y_t\;\middle| \; g_{\le t}, y_{\le t-1},z_{\le t-1}\right),$$
which implies 
\begin{align*}
    &\mathbb{E}\left(z_t\;\middle|\; g_{\le t}, z_{\le t-1}\right)\\
    =&\mathbb{E}\left(\mathbb{E}\left(\mathbb{E}\left(z_t\;\middle|\; g_{\le t}, y_{\le t}, z_{\le t-1}\right)\;\middle|\; g_{\le t}, y_{\le t-1}, z_{\le t-1}\right)\;\middle|\; g_{\le t}, z_{\le t-1}\right)\\
    =&\mathbb{E}\left(p_t\;\middle|\; g_{\le t}, z_{\le t-1}\right)\\
    =&p_t
\end{align*}
when $a(g_t,t)< mn-Y$. Hence \ref{property-c} holds for $t$.

In the case where $a(g_t, t)\ge mn -Y$, the equation $z_t=0$ holds almost surely. With the condition of \ref{property-d}, we have $\mathbb{E}\left(y_t\;\middle|\; g_{\le t}, y_{\le t-1}\right)=0$, which implies $y_t=0$ almost surely. Otherwise, we have $a(g_t, t)< mn -Y$. With \ref{property-c} and the condition of \ref{property-d}, we have $\mathbb{E}\left(z_t\;\middle|\; g_{\le t}, y_{\le t}, z_{\le t-1}\right)=1$ by the definition of $z_t$, which implies $z_t=1$ almost surely. Hence \ref{property-d} holds for $t$.

The last step is to prove \ref{property-a} for $t+1$. If $c(k,t)=m$, then $z_t=0$ almost surely by definition, which implies $c(k,t+1)=m$ almost surely. Hence \ref{property-a} holds for $t+1$ assuming $c(k,t)=m$. Otherwise, by inductive hypothesis, we have $a(k,t) < mn-Y$ and $c(k,t)<m$. By definition, $c(k,t)$ and $c(k,t+1)$ are integers with $$c(k,t+1)-c(k,t)\in \left\{0,1\right\},$$ and similarly $a(k,t)$ and $a(k,t+1)$ are integers with $$a(k,t+1)-a(k,t)\in \left\{0,1\right\}.$$ If \ref{property-a} does not hold for $t+1$, then we must have $$a(k,t+1)=a(k,t)+1$$ and $$c(k,t+1)=c(k,t)= m-mn +a(k,t)-Y.$$ The equation $a(k,t+1)=a(k,t)+1$ implies that $g_t=k$. Then the equation $c(k,t+1)=c(k,t)$ implies that $z_t=0$. And the equation $c(k,t)= m-mn +a(k,t)-Y$ implies that $z_t=0$ almost surely by the definition of $z_t$. Hence \ref{property-a} folds for $t+1$ almost surely. 

Therefore, we have constructed the Boolean vector $(z_1,\ldots,z_{mn})$ with desired properties by the principle of mathematical induction.

\end{proof}

\subsection{Deviation of $\sum_{i=1}^{mn} z_i$ from $m$}
Let $(z_1,\ldots,z_{mn})$ be the Boolean vector constructed as in Lemma \ref{z-construction}. For each $1\le k\le n$ and each $1\le t\le mn+1$, define $c(k,n)$ as in Lemma \ref{z-construction}. Similar to (\ref{sum}) and (\ref{sum_y}) we have \begin{equation}\label{sum_z}
    \sum_{k=1}^n c(k,t)=\sum_{i=1}^{t-1} z_i.
\end{equation}
By (\ref{sum}), we have $$\sum_{i=1}^{t-1} z_i -\frac{t-1}{n}= \sum_{k=1}^n\left(c(k,t)-\frac{a(k,t)}{n}\right).$$
Thus, to bound the deviation of $\sum_{i=1}^{mn}z_i$ from $m$, it is natural to consider the deviation of $c(k,t)$ from $\frac{a(k,t)}{n}$ for each $1\le k\le n$ and each $1\le t\le mn+1$. The term $c(k,t)-\frac{a(k,t)}{n}$ also appears in the deviation of $\mathbb{E}\left(z_t \;\middle|\; g_{\le t},z_{\le t-1}\right)$ from $\frac{1}{n}$, according to the condition \ref{property-c} in Lemma \ref{z-construction}.

Define a function $f:\mathbb{R}\to \mathbb{R}$ by $$f(x):=\begin{cases} x^2 &\mbox{, if } x\le 0,\\0&\mbox{, if } 0<x<\frac{Y}{n},\\ \left(x-\frac{Y}{n}\right)^2 &\mbox{, if } x\ge\frac{Y}{n}. \end{cases}$$
By the identity $$\left(x-\frac{Y}{n}\right)^2 = \frac{x^2}{2}   - \frac{Y^2}{n^2} +\frac{1}{2}\left(x-\frac{2Y}{n}\right)^2,$$ we have \begin{equation}\label{f-bound} f(x)\ge \max\left\{0, \frac{x^2}{2} -\frac{Y^2}{n^2}\right\}\end{equation} for all $x\in \mathbb{R}$.

For each $1\le k \le n$ and each $1\le t\le mn+1$, let $X_{k,t}$ denote the random variable 
    $$X_{k,t} := f\left(c(k,t)-\frac{a(k,t)}{n}\right) -3c(k,t) -\frac{3a(k,t)}{n}.$$
We prove that $X_{k,1}, X_{k,2},\ldots, X_{k, mn+1}$ is a supermartingale.

\begin{lemma}\label{supermartingale}
    For each $1\le k\le n$ and each $1\le t\le mn$, we have $$\mathbb{E}\left(X_{k,t+1}\;\middle| \; g_{\le t}, z_{\le t-1}\right)\le X_{k,t}.$$
\end{lemma}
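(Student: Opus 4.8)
The plan is to fix $k$ and $t$ and work conditionally on the $\sigma$-algebra generated by $g_{\le t}$ and $z_{\le t-1}$, with respect to which $a(k,t)$, $c(k,t)$, the quantity $D_t := c(k,t) - \frac{a(k,t)}{n}$, and the event $\{g_t = k\}$ are all measurable (hence so is $X_{k,t}$). If $g_t \neq k$ then $a(k,t+1)=a(k,t)$ and $c(k,t+1)=c(k,t)$, so $X_{k,t+1}=X_{k,t}$ and there is nothing to prove; so I would reduce to the case $g_t=k$, where $a(k,t+1)=a(k,t)+1$ and $c(k,t+1)=c(k,t)+z_t$ with $z_t$ Boolean. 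Writing $p:=\mathbb{P}\left(z_t=1\;\middle|\;g_{\le t},z_{\le t-1}\right)$, this gives
$$\mathbb{E}\left(X_{k,t+1}-X_{k,t}\;\middle|\;g_{\le t},z_{\le t-1}\right)=(1-p)\left(f\left(D_t-\tfrac1n\right)-f(D_t)\right)+p\left(f\left(D_t+1-\tfrac1n\right)-f(D_t)\right)-3p-\tfrac3n,$$
and the task becomes to show this is $\le 0$.

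The first ingredient is the second-order estimate $f(x+h)\le f(x)+f'(x)h+h^2$ for all $x,h\in\mathbb{R}$, which holds because $f\in C^1$ with $f'$ nondecreasing and $2$-Lipschitz, so that $f(x+h)-f(x)-f'(x)h=\int_0^h\left(f'(x+s)-f'(x)\right)\,ds\le h^2$. Applying this with $h=-\frac1n$ and with $h=1-\frac1n$ (so that $h^2\le1$), and collecting the $f'(D_t)$ terms, the displayed conditional expectation is at most
$$f'(D_t)\left(p-\tfrac1n\right)+\tfrac{1-p}{n^2}-2p-\tfrac3n\;\le\;f'(D_t)\left(p-\tfrac1n\right)+\tfrac1{n^2}-\tfrac3n,$$
using $p\ge0$. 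So it suffices to prove the sign-cancellation $f'(D_t)\left(p-\tfrac1n\right)\le0$, since then $\frac1{n^2}-\frac3n<0$ finishes the argument.

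This sign-cancellation is where the properties of Lemma~\ref{z-construction} enter, and I expect it to be the heart of the matter (though it is short). If $a(k,t)\ge mn-Y$, then property~\ref{property-a} forces $c(k,t)=m$, hence $D_t=m-\frac{a(k,t)}{n}\in\left(0,\frac Yn\right]$ (using $a(k,t)\le t-1<mn$), so $f'(D_t)=0$ and the product vanishes. If $a(k,t)<mn-Y$, then property~\ref{property-c} gives $p=\frac{m-c(k,t)}{mn-a(k,t)-Y}$; substituting $m-c(k,t)=\frac{mn-a(k,t)}{n}-D_t$ and clearing the positive denominator yields
$$p-\tfrac1n=\frac{Y-nD_t}{n\left(mn-a(k,t)-Y\right)},$$
so $p-\frac1n$ has the same sign as $\frac Yn-D_t$. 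Meanwhile $f'$ is negative on $(-\infty,0)$, zero on $\left[0,\frac Yn\right]$, and positive on $\left(\frac Yn,\infty\right)$, i.e. $f'(D_t)$ has the same sign as $D_t-\frac Yn$ or vanishes. Hence $f'(D_t)$ and $p-\frac1n$ always carry opposite signs or one of them is zero, so their product is $\le0$.

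I expect the only genuine subtlety to be the bookkeeping around the conditioning: verifying that $D_t$ and $\{g_t=k\}$ are $\sigma(g_{\le t},z_{\le t-1})$-measurable, that the conditional expectation of a function of $z_t$ is computed against the conditional law of $z_t$ (mass $p$ at $1$), and that the degenerate case $a(k,t)\ge mn-Y$ — where the formula defining $p$ has a nonpositive denominator — is handled by the observation that the construction then sets $z_t=0$, so $p=0$ and $D_t$ lies in the flat interval of $f$. The two analytic facts, the quadratic upper bound for $f$ and the sign computation, each take only a couple of lines.
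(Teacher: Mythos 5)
Your proof is correct, and it rests on the same two pillars as the paper's argument: the drift formula from property \ref{property-c} of Lemma \ref{z-construction}, which makes $p-\frac{1}{n}$ proportional to $\frac{Y}{n}-D_t$, and the shape of $f$, so that the first-order term has the favorable sign, while the linear terms $-3c(k,t)-\frac{3a(k,t)}{n}$ built into $X_{k,t}$ absorb the second-order error. The difference is organizational: the paper splits into three ranges of $x=c(k,t)-\frac{a(k,t)}{n}$, using the Lipschitz bound $\left|f(x+\Delta)-f(x)\right|\le 3\left|\Delta\right|$ when $-1\le x\le \frac{Y}{n}+1$ and exact quadratic expansion in the two outer ranges, whereas you replace the whole case analysis by the single uniform estimate $f(x+h)\le f(x)+f'(x)h+h^2$ (valid because $f$ is $C^1$ with $f'$ nondecreasing and $2$-Lipschitz) followed by the one-line sign computation $f'(D_t)\left(p-\frac{1}{n}\right)\le 0$; this is a cleaner packaging that isolates the only probabilistic input. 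A further small advantage of your version: you treat the boundary case $g_t=k$, $a(k,t)\ge mn-Y$ explicitly (property \ref{property-a} forces $c(k,t)=m$, hence $D_t\in\left(0,\frac{Y}{n}\right]$, $f'(D_t)=0$, and $z_t=0$ so $p=0$), whereas the paper folds this case into the branch where it asserts $a(k,t+1)=a(k,t)$, which is not literally true when $g_t=k$; the lemma's conclusion still holds there (the increment of $X_{k,t}$ is then $-\frac{3}{n}$), but your bookkeeping at that point is the more careful one.
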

\begin{proof}
    If $g_t\neq k$ or $a(k,t)\ge mn-Y$, then by the condition \ref{property-a} in Lemma \ref{z-construction}, we have $a(k,t+1)=a(k,t)$ and $c(k,t+1)=c(k,t)$. So by definition we have $X_{k,t+1}=X_{k,t}$. For the rest of the proof, suppose that $g_t=k$ and $a(k,t)< mn-Y$, then we have $a(k,t+1)=a(k,t)+1$ and $c(k,t+1)=c(k,t)+z_t$. By the condition \ref{property-c} in Lemma \ref{z-construction}, we have $$\mathbb{E}\left(z_t-\frac{1}{n}\;\middle|\; g_{\le t}, z_{\le t-1}\right) = \frac{- c(k,t)+\frac{a(k,t)}{n}+\frac{Y}{n}}{mn-a(k,t)-Y}.$$ 
    
    Let $x$ denote the term $c(k,t)-\frac{a(k,t)}{n}$. If $-1 \le x \le \frac{Y}{n}+1$, then we have $\left| f(x+\Delta)-f(x)\right| \le 3|\Delta|$ for any $|\Delta|\le 1$, so
    \begin{align*}
    &\mathbb{E}\left(X_{k,t+1}\;\middle| \; g_{\le t}, z_{\le t-1}\right)-X_{k,t}\\
    =&\mathbb{E}\left(f\left(x+z_t-\frac{1}{n}\right)-f\left(x\right)-3z_t-\frac{3}{n}\;\middle| \; g_{\le t}, z_{\le t-1} \right)\\
    \le& \mathbb{E}\left( 3\left|z_t-\frac{1}{n}\right|-3z_t-\frac{3}{n}\;\middle| \; g_{\le t}, z_{\le t-1} \right)\\
    \le&0.
    \end{align*}
    If $x<-1$, then by definition we have $f(x+\Delta) =(x+\Delta)^2$ for any $|\Delta|\le 1$, so
\begin{align*}
    &\mathbb{E}\left(X_{k,t+1}\;\middle| \; g_{\le t}, z_{\le t-1}\right)-X_{k,t}\\
    =&\mathbb{E}\left(f\left(x+z_t-\frac{1}{n}\right)-f\left(x\right)-3z_t-\frac{3}{n}\;\middle| \; g_{\le t}, z_{\le t-1} \right)\\
    =& \mathbb{E}\left( \left(x+z_t-\frac{1}{n}\right)^2-x^2-3z_t-\frac{3}{n}\;\middle| \; g_{\le t}, z_{\le t-1} \right)\\
    =& \mathbb{E}\left( 2 x \left(z_t-\frac{1}{n}\right)-\frac{2(n+1)z_n}{n}-\frac{3n-1}{n^2}\;\middle| \; g_{\le t}, z_{\le t-1} \right)\\
    \le&2x \;\mathbb{E}\left(z_t-\frac{1}{n}\;\middle|\; g_{\le t}, z_{\le t-1}\right) \\
    =& \frac{2x\left(- x+ \frac{Y}{n}\right)}{mn-a(k,t)-Y}
    \\
    \le&0.
    \end{align*}
    If $x>\frac{Y}{n}+1$, then by definition we have $f(x+\Delta) =(x+\Delta-\frac{Y}{n})^2$ for any $|\Delta|\le 1$, so
    \begin{align*}
    &\mathbb{E}\left(X_{k,t+1}\;\middle| \; g_{\le t}, z_{\le t-1}\right)-X_{k,t}\\
    =&\mathbb{E}\left(f\left(x+z_t-\frac{1}{n}\right)-f\left(x\right)-3z_t-\frac{3}{n}\;\middle| \; g_{\le t}, z_{\le t-1} \right)\\
    =& \mathbb{E}\left( \left(x-\frac{Y}{n}+z_t-\frac{1}{n}\right)^2-\left(x-\frac{Y}{n}\right)^2-3z_t-\frac{3}{n}\;\middle| \; g_{\le t}, z_{\le t-1} \right)\\
    =& \mathbb{E}\left( 2 \left(x-\frac{Y}{n}\right) \left(z_t-\frac{1}{n}\right)-\frac{2(n+1)z_n}{n}-\frac{3n-1}{n^2}\;\middle| \; g_{\le t}, z_{\le t-1} \right)\\
    \le&2\left(x-\frac{Y}{n}\right)\mathbb{E}\left(z_t-\frac{1}{n}\;\middle|\; g_{\le t}, z_{\le t-1}\right) \\
    =& -\frac{2\left(x-\frac{Y}{n}\right)^2}{mn-a(k,t)-Y}
    \\
    \le&0.
    \end{align*}
    
    Therefore, the stochastic process $X_{k,1}, X_{k,2},\ldots, X_{k, mn+1}$ is a supermartingale.
\end{proof}

Let $(w_1,\ldots, w_{mn})$ denote the Boolean vector whose coordinate $w_t$ is the indicator function of the event $a(g_t,t)\le \frac{1}{2}mn$. We are going to bound the sums $\sum_{i=1}^{mn}(1-w_i)\left(z_i-\frac{1}{n}\right)$ and $\sum_{i=1}^{mn} w_i \left(z_i-\frac{1}{n}\right)$ separately by optional stopping theorem.

\begin{lemma}\label{bound-z-1}
    We have $$\left|\mathbb{E}\left(\sum_{i=1}^{mn}\left(1-w_i\right)\left(z_i-\frac{1}{n}\right)\right)\right|\le 2\sqrt{6\sum_{i=1}^{mn} \mathbb{E}(z_i) +8m}.$$
\end{lemma}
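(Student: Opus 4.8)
The plan is to bound the second moment. Throughout write $x_t^{(k)} := c(k,t) - \frac{a(k,t)}{n}$, so that $x_{t+1}^{(k)} - x_t^{(k)} = \left(z_t - \frac{1}{n}\right)\mathbf{1}[g_t = k]$. For each $k$ let $\tau_k$ be the first $t \in \{1,\dots,mn+1\}$ with $a(k,t) > \frac{1}{2}mn$, setting $\tau_k := mn+1$ if there is none; since $a(k,\cdot)$ depends only on the guesses and is nondecreasing, $\tau_k$ is a stopping time for the filtration $\mathcal{F}_t := \sigma\!\left(g_{\le t}, z_{\le t-1}\right)$ appearing in Lemma \ref{supermartingale}, and $a(g_i,i) > \frac{1}{2}mn$ holds exactly when $i \ge \tau_{g_i}$. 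Grouping the sum by the value of $g_i$ and telescoping then gives
$$\sum_{i=1}^{mn}(1-w_i)\left(z_i - \frac{1}{n}\right) = \sum_{k=1}^{n}\ \sum_{i=\tau_k}^{mn}\left(x_{i+1}^{(k)} - x_i^{(k)}\right) = \sum_{k=1}^{n} d_k, \qquad d_k := x_{mn+1}^{(k)} - x_{\tau_k}^{(k)}.$$
A nonzero $d_k$ requires a round $i$ with $g_i = k$ and $a(k,i) > \frac{1}{2}mn$, hence $a(k,mn+1) \ge \lfloor \frac{1}{2}mn\rfloor + 2$; as $\sum_k a(k,mn+1) = mn$ by (\ref{sum}) and $2\big(\lfloor\frac{1}{2}mn\rfloor+2\big) > mn$, at most one label $k$ can satisfy this, so at most one $d_k$ is nonzero. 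Consequently $\big(\sum_k d_k\big)^2 = \sum_k d_k^2$ almost surely, and by the Cauchy--Schwarz inequality the quantity to be bounded, $\big|\mathbb{E}\big(\sum_k d_k\big)\big|$, is at most $\sqrt{\mathbb{E}\big(\sum_k d_k^2\big)} = \sqrt{\sum_k \mathbb{E}(d_k^2)}$; thus it suffices to prove $\sum_k \mathbb{E}(d_k^2) \le 24\sum_{i=1}^{mn}\mathbb{E}(z_i) + 32m$.

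To this end I would first establish, for arbitrary stopping times $\rho_k \le mn+1$, the inequality $\mathbb{E}\big(\sum_k f(x_{\rho_k}^{(k)})\big) \le 3\sum_i \mathbb{E}(z_i) + 3m$. This comes from applying the optional stopping theorem to the supermartingale $(X_{k,t})_t$ of Lemma \ref{supermartingale}: one gets $\mathbb{E}(X_{k,\rho_k}) \le X_{k,1} = f(0) = 0$, and summing over $k$ while using the monotonicity of $c(k,\cdot)$ and $a(k,\cdot)$ together with (\ref{sum}) and (\ref{sum_z}) to bound $\sum_k c(k,\rho_k) \le \sum_k c(k,mn+1) = \sum_i z_i$ and $\sum_k a(k,\rho_k) \le mn$. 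Then, combining $(a-b)^2 \le 2a^2 + 2b^2$ with the bound $t^2 \le 2f(t) + \frac{2Y^2}{n^2}$ from (\ref{f-bound}), the nonnegativity of $f$, and the fact that $\sum_k \mathbf{1}[d_k \ne 0] \le 1$ just established, I would estimate
$$\sum_k d_k^2 \le \sum_k\left(2\big(x_{mn+1}^{(k)}\big)^2 + 2\big(x_{\tau_k}^{(k)}\big)^2\right)\mathbf{1}[d_k \ne 0] \le 4\sum_k f\!\big(x_{mn+1}^{(k)}\big) + 4\sum_k f\!\big(x_{\tau_k}^{(k)}\big) + \frac{8Y^2}{n^2}.$$
Taking expectations and applying the displayed inequality of this paragraph with $\rho_k \equiv mn+1$ and with $\rho_k = \tau_k$ yields $\mathbb{E}\big(\sum_k d_k^2\big) \le 24\sum_i \mathbb{E}(z_i) + 24m + \frac{8Y^2}{n^2}$; since $Y \le \frac{1}{6}\sqrt{m}\,n$ the last term is at most $\frac{2m}{9} < 8m$, which gives the required bound and hence $\big|\mathbb{E}\big(\sum_i (1-w_i)(z_i-\tfrac1n)\big)\big| \le 2\sqrt{6\sum_i \mathbb{E}(z_i) + 8m}$.

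The step I expect to be the main obstacle is the first one: recognizing the telescoping and, crucially, that at most one $d_k$ is nonzero. This is what makes the passage to the second moment lossless enough to succeed — the pointwise estimate $d_k^2 \le 2(x_{mn+1}^{(k)})^2 + 2(x_{\tau_k}^{(k)})^2$ is far too weak if summed blindly over all $k$, but once it is restricted to the single surviving label each of the two squared terms is controlled in expectation through $f$ by the supermartingale, so no genuine cancellation between $x_{mn+1}^{(k)}$ and $x_{\tau_k}^{(k)}$ has to be tracked; picking the stopping time $\tau_k$ (the first time $a(k,\cdot)$ exceeds $\frac12 mn$) so that it both matches the constraint $a(g_i,i) > \frac12 mn$ and is the natural point at which to split the supermartingale is the other place where some care is needed.
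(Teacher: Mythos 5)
Your proof is correct and follows essentially the same route as the paper: optional stopping of the supermartingale $X_{k,t}$ from Lemma \ref{supermartingale} at a stopping time marking when $a(k,\cdot)$ exceeds $\tfrac12 mn$, the observation that only one label can be involved in rounds with $w_i=0$, the telescoping identity expressing $\sum_i(1-w_i)(z_i-\tfrac1n)$ as $c(k,mn+1)-c(k,\tau_k)-\tfrac{1}{n}\left(a(k,mn+1)-a(k,\tau_k)\right)$ for that label, and a second-moment (Cauchy--Schwarz) bound via (\ref{f-bound}). The only differences are cosmetic (you bound $\sum_k\mathbb{E}(d_k^2)$ where the paper bounds a maximum over $k$ against $\sum_k(X_{k,\tau_k}+X_{k,mn+1})$), and the constants come out the same.
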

\begin{proof}
For each $1\le k\le n$, define the integer $1\le \tau_k\le mn+1$ by
$$\tau_k:=\begin{cases}
    t &\mbox{, if } t\mbox{ is the least integer } i \mbox{ with } w_i=0,
    \\
    mn+1 &\mbox{, if } w_i=1 \mbox{ for all }i. 
\end{cases}$$
Then by (\ref{sum}), (\ref{sum_z}) and (\ref{f-bound}), we have
\begin{align*}
    &\sum_{k=1}^n \left(X_{k,\tau_k}+ X_{k, mn+1}\right)\\
    =& \sum_{k=1}^n \left(f\left(c(k,\tau_k)-\frac{a(k,\tau_k)}{n}\right)+f\left(k,mn+1)-\frac{a(k,mn+1)}{n}\right)\right) \\
    &-3 \sum_{k=1}^n\left(c(k,\tau_k) +\frac{a(k,\tau_k)}{n}+c(k,mn+1) +\frac{a(k,mn+1)}{n}\right)\\
    \ge&\max_{1\le k\le n} \left(f\left(c(k,\tau_k)-\frac{a(k,\tau_k)}{n}\right)+f\left(k,mn+1)-\frac{a(k,mn+1)}{n}\right)\right) \\
    & -6 \sum_{k=1}^n\left(c(k,mn+1) +\frac{a(k,mn+1)}{n}\right)\\
    =&\max_{1\le k\le n} \left(f\left(c(k,\tau_k)-\frac{a(k,\tau_k)}{n}\right)+f\left(k,mn+1)-\frac{a(k,mn+1)}{n}\right)\right)\\
    & -6 \sum_{i=1}^{mn} z_i -6m\\
    \ge& \frac{1}{2} \max_{1\le k\le n}\left(\left(c(k,\tau_k)-\frac{a(k,\tau_k)}{n}\right)^2 + \left(c(k,mn+1)-\frac{a(k,mn+1)}{n}\right)^2\right) \\
    & -\frac{2Y^2}{n^2}-6 \sum_{i=1}^{mn} z_i -6m\\
    \ge &\frac{1}{4}\max_{1\le k\le n} \left(c(k,mn+1)-c(k,\tau_k)-\frac{a(k,mn+1)-a(k,\tau_k)}{n}\right)^2\\
    &-6\sum_{i=1}^{mn} z_i -8m\\
    =&\frac{1}{4} \left( \sum_{i=1}^{mn} (1-w_i)\left(z_i -\frac{1}{n}\right)\right)^2-6\sum_{i=1}^{mn} z_i -8m.
\end{align*}

Because there are only $mn$ guesses in total, for any $1\le i<j\le mn$ with $w_i=w_j=1$, we have $g_i=g_j$. Hence by definition we have $\tau_k = mn+1$ for at least $n-1$ values of $k$ in $[n]$. 

Because $\tau_k$ and $mn+1$ are stopping times, by Lemma \ref{supermartingale} and optional stopping theorem, we have $$\mathbb{E}\left(X_{k, \tau_k}\right)\le \mathbb{E}\left(X_{k,1}\right)=0$$
and 
$$\mathbb{E}\left(X_{k, mn+1}\right)\le \mathbb{E}\left(X_{k,1}\right)=0$$
for each $1\le k \le n$. Therefore 
\begin{align*}
    &\left|\mathbb{E}\left(\sum_{i=1}^m (1-w_i)\left(z_i -\frac{1}{n}\right) \right)\right|\\
    \le &\sqrt{\mathbb{E}\left(\left(\sum_{i=1}^m (1-w_i)\left(z_i -\frac{1}{n}\right) \right)^2\right)}\\
    = &\sqrt{\mathbb{E}\left(\max_{1\le k\le n} \left(c(k,mn+1)-c(k,\tau_k)-\frac{a(k,mn+1)-a(k,\tau_k)}{n}\right)^2\right)}\\
    \le& \sqrt{\mathbb{E}\left( 4\sum_{k=1}^n \left(X_{k,\tau_k}+X_{k,mn+1}\right) +24\sum_{i=1}^{mn} z_i +32m\right)}\\
    \le& 2\sqrt{6\sum_{i=1}^{mn} \mathbb{E}(z_i) +8m}.
\end{align*}
\end{proof}

An upper bound on $\left|\mathbb{E} \left(w_t\left(z_t-\frac{1}{n}\right)\right) \right|$ can be established in a similar manner.

\begin{lemma}\label{bound-z-2}
    For each $1\le t\le mn$, we have 
    $$\left|\mathbb{E} \left(w_t\left(z_t-\frac{1}{n}\right)\right) \right|\le \frac{4}{mn}\sqrt{6\sum_{i=1}^{mn} \mathbb{E}(z_i) +8m}.$$
\end{lemma}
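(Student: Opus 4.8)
The plan is to follow the pattern of Lemma~\ref{supermartingale} and Lemma~\ref{bound-z-1}, but applied to the single increment at round $t$ instead of to a telescoping sum. Write $x_s := c(g_s,s) - \frac{a(g_s,s)}{n}$ and let $\mathcal{F}_s := \sigma(g_{\le s}, z_{\le s-1})$, so that by Lemma~\ref{supermartingale} each $X_{k,1}, X_{k,2}, \ldots, X_{k,mn+1}$ is a bounded supermartingale for the filtration $(\mathcal{F}_s)$ with $X_{k,1} = 0$. First I would unfold property~\ref{property-c} of Lemma~\ref{z-construction}. On the event $\{w_t = 1\}$, i.e.\ $a(g_t,t) \le \frac12 mn$, one has $a(g_t,t) < mn - Y$ (because $Y \le \frac{\sqrt{m}\,n}{6} < \frac{mn}{2}$), so that property~\ref{property-c} gives
$$
\mathbb{E}\left( z_t - \tfrac1n \;\middle|\; \mathcal{F}_t \right) = \frac{m - c(g_t,t)}{mn - a(g_t,t) - Y} - \frac1n = \frac{\frac{Y}{n} - x_t}{mn - a(g_t,t) - Y};
$$
moreover $Y \le \frac{mn}{6}$, so on $\{w_t=1\}$ the denominator is at least $\frac{mn}{3}$. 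Since $w_t$ is $\mathcal{F}_t$-measurable, taking expectations (the quotient only being evaluated where $w_t = 1$) yields
$$
\left| \mathbb{E}\left( w_t\bigl(z_t - \tfrac1n\bigr) \right) \right| = \left| \mathbb{E}\left( \frac{w_t\bigl(\tfrac{Y}{n} - x_t\bigr)}{mn - a(g_t,t) - Y} \right) \right| \le \frac{3}{mn}\, \mathbb{E}\left( w_t\left| x_t - \tfrac{Y}{n} \right| \right).
$$

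Next I would reduce to estimating $\mathbb{E}\bigl(w_t f(x_t)\bigr)$. From the three-case definition of $f$ one checks that $\left| x - \tfrac{Y}{n}\right| \le \sqrt{f(x)} + \tfrac{Y}{n}$ for every $x \in \mathbb{R}$, whence, by Cauchy--Schwarz and $w_t \le 1$,
$$
\mathbb{E}\left( w_t\left| x_t - \tfrac{Y}{n}\right| \right) \le \mathbb{E}\left( w_t\sqrt{f(x_t)}\right) + \tfrac{Y}{n} \le \sqrt{\mathbb{E}\bigl( w_t f(x_t)\bigr)} + \tfrac{Y}{n}.
$$
To bound $\mathbb{E}\bigl(w_t f(x_t)\bigr)$, split according to the label guessed at round $t$: with $E_k := \{\, g_t = k,\ a(k,t) \le \tfrac12 mn \,\}$ — disjoint, $\mathcal{F}_t$-measurable events whose union is $\{w_t = 1\}$ — one has $w_t f(x_t) = \sum_{k=1}^n \mathbf{1}_{E_k}\, f\bigl( c(k,t) - \tfrac{a(k,t)}{n}\bigr)$. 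Since $f\bigl( c(k,t) - \tfrac{a(k,t)}{n}\bigr) = X_{k,t} + 3c(k,t) + \tfrac{3a(k,t)}{n}$ by the definition of $X_{k,t}$, the crux is an upper bound on $\mathbb{E}\bigl( \mathbf{1}_{E_k} X_{k,t}\bigr)$, and this is where the supermartingale is used.

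For that bound I would use the two-valued stopping time $\sigma_k$ equal to $t$ on $E_k$ and to $mn+1$ on $E_k^c$; it is a stopping time precisely because $E_k \in \mathcal{F}_t$. The optional stopping theorem then gives
$$
\mathbb{E}\bigl(\mathbf{1}_{E_k} X_{k,t}\bigr) + \mathbb{E}\bigl(\mathbf{1}_{E_k^c} X_{k,mn+1}\bigr) = \mathbb{E}\bigl(X_{k,\sigma_k}\bigr) \le \mathbb{E}\bigl(X_{k,1}\bigr) = 0,
$$
and since $f \ge 0$ forces $-X_{k,mn+1} \le 3c(k,mn+1) + \tfrac{3a(k,mn+1)}{n}$, we obtain $\mathbb{E}\bigl(\mathbf{1}_{E_k} X_{k,t}\bigr) \le 3\,\mathbb{E}\bigl( \mathbf{1}_{E_k^c}\bigl( c(k,mn+1) + \tfrac{a(k,mn+1)}{n}\bigr)\bigr)$. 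The remainder is bookkeeping that must be handled with care: monotonicity $c(k,t) \le c(k,mn+1)$ lets one pair $\mathbf{1}_{E_k^c} c(k,mn+1) + \mathbf{1}_{E_k} c(k,t) \le c(k,mn+1)$, and on $E_k$ one uses $a(k,t) \le \tfrac12 mn$; summing over $k$, invoking (\ref{sum}) and (\ref{sum_z}) at $t = mn+1$ (so that $\sum_k c(k,mn+1) = \sum_{i=1}^{mn} z_i$ and $\sum_k a(k,mn+1) = mn$), and using $\sum_k \mathbb{P}(E_k) \le 1$, one arrives at $\mathbb{E}\bigl( w_t f(x_t)\bigr) \le 3\sum_{i=1}^{mn}\mathbb{E}(z_i) + \tfrac{9m}{2}$.

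Assembling the three displays and then using $3\sum_i \mathbb{E}(z_i) + \tfrac{9m}{2} \le \tfrac{9}{16}\bigl(6\sum_i\mathbb{E}(z_i) + 8m\bigr)$, $Y \le \tfrac{\sqrt{m}\,n}{6}$, and $\sqrt{6\sum_i\mathbb{E}(z_i)+8m} \ge \sqrt{8m}$ to absorb the residual $\tfrac{3}{mn}\cdot\tfrac{Y}{n}$ term, one reaches the claimed inequality (in fact with a constant comfortably below $4$). The step I expect to be the real obstacle is this final chase for the constant: replacing $\left|x - \tfrac{Y}{n}\right| \le \sqrt{f(x)}+\tfrac{Y}{n}$ by the cruder $x^2 \le 2f(x) + 2\tfrac{Y^2}{n^2}$ and, on top of that, bounding $\mathbb{E}\bigl(\mathbf{1}_{E_k}X_{k,t}\bigr)$ through $\mathbb{E}\bigl(X_{k,t}^+\bigr) \le 3\mathbb{E}(c(k,t)) + \tfrac{3\mathbb{E}(a(k,t))}{n}$ — which double-counts $\sum_i\mathbb{E}(z_i)$ — would give a constant of about $3\sqrt{2} > 4$; so at least one of these estimates (here, both) has to be kept sharp, and it is the two-point stopping time $\sigma_k$ together with the indicator pairing $\mathbf{1}_{E_k}+\mathbf{1}_{E_k^c}=1$ that makes it go through.
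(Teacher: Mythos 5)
Your argument is correct, and its skeleton is the same as the paper's: on $\{w_t=1\}$ use property \ref{property-c} of Lemma \ref{z-construction} with the denominator bound $mn-a(g_t,t)-Y\ge \frac{1}{3}mn$, then Cauchy--Schwarz, then control a second moment of $c(g_t,t)-\frac{a(g_t,t)}{n}$ through the supermartingales $X_{k,\cdot}$ of Lemma \ref{supermartingale}, and finally absorb the $\frac{Y}{n}$ shift using $\sqrt{6\sum_i\mathbb{E}(z_i)+8m}\ge\sqrt{8m}$. Where you diverge is the middle step: you keep the per-label indicators, introduce the events $E_k=\{g_t=k,\ a(k,t)\le\frac12 mn\}$ and the two-valued stopping times $\sigma_k$, and recover $\mathbb{E}\bigl(w_t f(x_t)\bigr)\le 3\sum_{i=1}^{mn}\mathbb{E}(z_i)+\frac{9m}{2}$ by an indicator-pairing argument. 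The paper avoids all of this: since $f\ge 0$, it bounds the single term $f\bigl(c(g_t,t)-\frac{a(g_t,t)}{n}\bigr)$ by the full sum $\sum_{k}f\bigl(c(k,t)-\frac{a(k,t)}{n}\bigr)=\sum_k X_{k,t}+3\sum_{i<t}z_i+\frac{3(t-1)}{n}$ pointwise, combines this with (\ref{f-bound}) to get $\bigl(c(g_t,t)-\frac{a(g_t,t)}{n}\bigr)^2\le 2\sum_k X_{k,t}+6\sum_i z_i+8m$, and then only needs $\mathbb{E}(X_{k,t})\le 0$ at the \emph{deterministic} time $t$ — no genuine stopping time enters this lemma. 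This also means your closing assessment is off: the ``cruder'' route (dropping the indicators entirely and using $x^2\le 2f(x)+\frac{2Y^2}{n^2}$) does not push the constant past $4$; it yields $\frac{3}{mn}\sqrt{6\sum_i\mathbb{E}(z_i)+8m}+\frac{1}{2\sqrt{m}\,n}$, i.e.\ a constant of roughly $3+\frac{1}{4\sqrt 2}<4$, which is exactly the paper's computation. So your $\sigma_k$/pairing machinery is sound (each $\sigma_k$ is indeed a bounded stopping time since $E_k\in\mathcal{F}_t$, and your bookkeeping with (\ref{sum}) and (\ref{sum_z}) checks out, landing at $\frac{9}{4}+\frac{1}{4\sqrt2}<4$), but it buys nothing here; it is extra structure the paper's one-line nonnegativity bound makes unnecessary.
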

\begin{proof}
By (\ref{sum}), (\ref{sum_z}) and (\ref{f-bound}), we have
\begin{align*}
&\sum_{k=1}^n X_{k,t}\\
=&\sum_{k=1}^n\left(f\left(c(k,t)-\frac{a(k,t)}{n}\right) -3c(k,t) -\frac{3a(k,t)}{n} \right) \\
=&\sum_{k=1}^n f\left(c(k,t)-\frac{a(k,t)}{n}\right) -3\sum_{i=1}^{t-1} z_i-\frac{3(t-1)}{n}\\
\ge& f\left(c(g_t,t)-\frac{a(g_t,t)}{n}\right) -3\sum_{i=1}^{mn} z_i-3m\\
\ge& \frac{1}{2}\left(c(g_t,t)-\frac{a(g_t,t)}{n}\right)^2 -\frac{Y^2}{n^2}-3\sum_{i=1}^{mn} z_i-3m\\
\ge&\frac{1}{2}\left(c(g_t,t)-\frac{a(g_t,t)}{n}\right)^2 -3\sum_{i=1}^{mn} z_i-4m
\end{align*}

Because $t$ is a stopping time, by Lemma \ref{supermartingale} and optional stopping theorem, we have 
$$\mathbb{E}(X_{k,t})\le 0$$
for each $1\le k\le n $ and each $1\le t\le mn+1$. 

When $w_t=1$, we have $$a(g_t,t)\le \frac{1}{2}mn\le \frac{2}{3}mn-Y.$$ Therefore, by the condition \ref{property-c} in Lemma \ref{z-construction}, we have 
\begin{align*}
    &\left|\mathbb{E}\left(w_t \left( z_t-\frac{1}{n}\right)\right)\right|\\
    =&\left|\mathbb{E}\left(\mathbb{E}\left(w_t \left( z_t-\frac{1}{n}\right)\; \middle|\; g_{\le t},z_{\le t-1}\right)\right)\right|\\
    \le&\mathbb{E}\left(\left|\mathbb{E}\left(w_t \left( z_t-\frac{1}{n}\right)\; \middle|\; g_{\le t},z_{\le t-1}\right)\right|\right)\\
    \le &\mathbb{E}\left(\frac{3\left|-c(g_t,t)+\frac{a(g_t,t)}{n}+\frac{Y}{n}\right|}{mn}\right)\\
    \le &\frac{3}{mn}\mathbb{E}\left(\left|c(g_t,t)-\frac{a(g_t,t)}{n}\right|\right)+\frac{1}{2\sqrt{m} n}\\
    \le& \frac{3}{mn}\sqrt{\mathbb{E}\left(\left(c(g_t,t)-\frac{a(g_t,t)}{n}\right)^2\right)}+\frac{1}{2\sqrt{m}n}\\
    \le& \frac{3}{mn}\sqrt{\mathbb{E}\left(2\sum_{k=1}^n X_{k,t} +6\sum_{i=1}^{mn} z_i +8m\right)}+\frac{1}{2\sqrt{m}n}\\
    \le& \frac{3}{mn}\sqrt{6\sum_{i=1}^{mn} \mathbb{E}(z_i) +8m}+\frac{1}{2\sqrt{m}n}\\
    \le& \frac{4}{mn}\sqrt{6\sum_{i=1}^{mn} \mathbb{E}(z_i) +8m}.
\end{align*}
\end{proof}

With Lemma \ref{bound-z-1} and Lemma \ref{bound-z-2}, we can bound the deviation of the expectation of $\sum_{i=1}^{mn} z_i$ from $m$.

\begin{lemma}\label{bound-z-all}
    We have $$\left|\sum_{i=1}^{mn}\mathbb{E}(z_i) -m\right|\le 300\sqrt{m}.$$
\end{lemma}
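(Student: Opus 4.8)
The plan is to write $S := \sum_{i=1}^{mn} \mathbb{E}(z_i)$ and express the deviation $S - m$ in terms of the quantities already controlled by Lemmas~\ref{bound-z-1} and~\ref{bound-z-2}. Using~(\ref{sum}) and~(\ref{sum_z}), summing the telescoping increments gives
\[
\sum_{i=1}^{mn} z_i - m = \sum_{i=1}^{mn}\left(z_i - \frac{1}{n}\right) + \left(mn\cdot\frac1n - m\right) = \sum_{i=1}^{mn}\left(z_i - \frac1n\right),
\]
so after taking expectations the target is to bound $\left|\sum_{i=1}^{mn}\mathbb{E}\!\left(z_i - \frac1n\right)\right|$. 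Now split according to $w_i$: the first piece $\sum_i (1-w_i)(z_i - \frac1n)$ is handled directly by Lemma~\ref{bound-z-1}, and the second piece $\sum_i w_i(z_i - \frac1n)$ is bounded by $\sum_{t=1}^{mn}\left|\mathbb{E}(w_t(z_t-\frac1n))\right| \le mn \cdot \frac{4}{mn}\sqrt{6S + 8m} = 4\sqrt{6S+8m}$ via Lemma~\ref{bound-z-2}. Adding the two contributions yields $|S - m| \le 6\sqrt{6S + 8m}$ (a small constant depending on how carefully one combines $2\sqrt{\cdot}$ and $4\sqrt{\cdot}$; I would just use the crude bound that both are $\le 4\sqrt{6S+8m}$, giving $|S-m|\le 8\sqrt{6S+8m}$, which is still comfortably enough).

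Next I would close the loop: I have an inequality of the shape $|S - m| \le C\sqrt{6S + 8m}$ with $C$ an absolute constant around $6$ to $8$. Since $S \ge 0$, this is a quadratic constraint on $S$. Writing $S \le m + C\sqrt{6S+8m}$, and noting that either $S \le 2m$ (in which case $\sqrt{6S+8m} \le \sqrt{20m} \le 5\sqrt m$ and we immediately get $|S-m| \le 5C\sqrt m$), or $S > 2m$, in which case $6S + 8m < 10S$ so $S - m \le C\sqrt{10S}$, forcing $S \le m + C\sqrt{10 S}$; solving this quadratic in $\sqrt S$ gives $\sqrt S \le \frac{C\sqrt{10} + \sqrt{10C^2 + 4m}}{2}$, hence $S = O(m + C^2) = O(m)$ as soon as $m$ is not tiny, and then re-substituting gives $|S - m| = O(C\sqrt m)$. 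Tracking the constants with $C \le 8$ and using $m \ge 1$ (or the ambient hypothesis $n \ge 1200\sqrt m$, which forces $m$ large enough that the lower-order terms are absorbed), one lands at $|S - m| \le 300\sqrt m$ with room to spare.

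The one genuine subtlety — and the step I expect to need the most care — is the bootstrapping: Lemmas~\ref{bound-z-1} and~\ref{bound-z-2} both bound things in terms of $\sqrt{\sum_i \mathbb{E}(z_i)}$, i.e. in terms of $\sqrt S$ itself, so the estimate is self-referential and one must solve the resulting quadratic inequality rather than read off a bound directly. This is routine but it is where a sloppy constant would blow up, so I would be explicit: first derive $S \le m + 8\sqrt{6S + 8m}$, then use $\sqrt{6S+8m} \le \sqrt{6S} + \sqrt{8m} \le \sqrt{6S} + 3\sqrt m$, so $S \le m + 8\sqrt 6\,\sqrt S + 24\sqrt m \le 25\sqrt m \cdot \max\{1,\ldots\} + 20\sqrt S$; treating $\sqrt S$ as the unknown $u$ in $u^2 - 20u - 25\sqrt m \le 0$ gives $u \le 10 + \sqrt{100 + 25\sqrt m} \le 10 + 5 m^{1/4} + 10 \le 11\sqrt m$ for $m$ large, hence $S \le 121 m$. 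Feeding $S \le 121m$ back into $|S-m| \le 8\sqrt{6S+8m} \le 8\sqrt{734 m} < 220\sqrt m$ finishes it, well under the claimed $300\sqrt m$. (Any of several equivalent routings of these elementary inequalities works; the point is only to keep the constants honest.)
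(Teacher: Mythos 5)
Your proposal is correct and follows essentially the same route as the paper: split $\sum_i\mathbb{E}\left(z_i-\frac{1}{n}\right)$ according to $w_i$, apply Lemma \ref{bound-z-1} to one piece and Lemma \ref{bound-z-2} (summed over $t$) to the other to get the self-referential bound $\left|S-m\right|\le 6\sqrt{6S+8m}$, and then solve the resulting quadratic inequality, which the paper leaves implicit. The only caveat is that you should keep the combined constant $6$ rather than relax it to $8$: with $8$ the quadratic gives roughly $386\sqrt{m}$ when $m$ is small (and the hypothesis $n\ge 1200\sqrt{m}$ does not force $m$ to be large), whereas with $6$ one gets about $219\sqrt{m}$ for all $m\ge 1$, safely under $300\sqrt{m}$.
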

\begin{proof}
    By Lemma \ref{bound-z-1} and Lemma \ref{bound-z-2}, we have 
    \begin{align*}
        &\left|\sum_{i=1}^{mn}\mathbb{E}(z_i) -m\right|\\
        \le &\left|\mathbb{E}\left(\sum_{i=1}^{mn}\left(1-w_i\right)\left(z_i-\frac{1}{n}\right)\right)\right|+ \sum_{i=1}^{mn}\left|\mathbb{E} \left(w_i\left(z_i-\frac{1}{n}\right)\right) \right|\\
        \le& 6\sqrt{6\sum_{i=1}^{mn} \mathbb{E}(z_i) +8m}.
    \end{align*}
Therefore we have 
$$\left|\sum_{i=1}^{mn}\mathbb{E}(z_i) -m\right|\le 300\sqrt{m}.$$
\end{proof}

\subsection{Upper bound on $\sum_{i=1}^{mn} y_i$}\label{section-compare-yz}

By (\ref{sum_y}) and (\ref{sum_z}), we have 
$$\sum_{i=1}^{t-1} y_i -\sum_{i=1}^{t-1}z_i =\sum_{k=1}^n\left(b(k,t)-c(k,t)\right).$$
In order to compare $\sum_{i=1}^{mn} y_i$ with $\sum_{i=1}^{mn}z_i$, we only need to compare $b(k,mn+1)$ with $c(k,mn+1)$ for each $1\le k\le n$. We prove the following statement inductively.

\begin{lemma}\label{compare}
    For every $1\le k\le n$ and $1\le t\le mn+1$, then we have $$ \left(b(k,t)\le c(k,t)\right)\lor \left(\sum_{i=1}^{mn} y_i> Y\right)$$
    almost surely.
\end{lemma}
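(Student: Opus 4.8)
The plan is to run an induction on $t$. The base case $t=1$ is trivial since $b(k,1)=c(k,1)=0$. For the inductive step, fix $k$ and $t$, and assume the statement holds at time $t$; we want it at time $t+1$. Since $b(\cdot,\cdot)$ and $c(\cdot,\cdot)$ only change in a round where the Guesser guesses $k$, we may assume $g_t=k$, as otherwise $b(k,t+1)=b(k,t)$ and $c(k,t+1)=c(k,t)$ and there is nothing to prove. We may also assume $\sum_{i=1}^{mn} y_i \le Y$, hence in particular $\sum_{i=1}^{t-1} y_i \le Y$ (and also $\sum_{i=1}^{t} y_i \le Y$); otherwise the right disjunct holds and we are done. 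So from now on work on the event where $g_t=k$, $b(k,t)\le c(k,t)$, and $\sum_{i=1}^{t} y_i\le Y$.

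Now I would split according to whether $b(k,t) < c(k,t)$ or $b(k,t)=c(k,t)$. In the strict case $b(k,t)<c(k,t)$: since $b$ increases by at most $1$ and $c$ by at least $0$, we get $b(k,t+1)\le b(k,t)+1 \le c(k,t) \le c(k,t+1)$, and we are done. The delicate case is $b(k,t)=c(k,t)$, where I need to show that if $y_t=1$ then also $z_t=1$. The idea is to compare the two conditional expectations and invoke property \ref{property-d} of Lemma \ref{z-construction}. By the DGS-type bound (\ref{3.1}),
$$\mathbb{E}\left(y_t \;\middle|\; g_{\le t}, y_{\le t-1}\right) \le \frac{m - b(g_t,t)}{mn - a(g_t,t) - \sum_{i=1}^{t-1} y_i},$$
provided $a(g_t,t) < mn - \sum_{i=1}^{t-1}y_i$; and by property \ref{property-c},
$$\mathbb{E}\left(z_t \;\middle|\; g_{\le t}, z_{\le t-1}\right) = \frac{m - c(g_t,t)}{mn - a(g_t,t) - Y},$$
provided $a(g_t,t) < mn - Y$. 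Using $b(g_t,t) = c(g_t,t)$ and $\sum_{i=1}^{t-1} y_i \le Y$, the numerators agree and the denominator of the $y$-bound is the larger one (or the constraint fails), so $\mathbb{E}(y_t\mid g_{\le t}, y_{\le t-1}) \le \mathbb{E}(z_t\mid g_{\le t}, z_{\le t-1})$. Property \ref{property-d} then yields $y_t \le z_t$, hence $b(k,t+1) = b(k,t) + y_t \le c(k,t) + z_t = c(k,t+1)$.

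The main obstacle is bookkeeping around the two side conditions "$a(g_t,t) < mn - \sum_{i=1}^{t-1} y_i$" and "$a(g_t,t) < mn - Y$" under which (\ref{3.1}) and property \ref{property-c} are valid. One must check that in the boundary cases where these fail — i.e. $a(g_t,t)$ is large — both $y_t$ and $z_t$ are forced to $0$ (for $z_t$ this is exactly how it is constructed in Lemma \ref{z-construction}; for $y_t$ it follows because the deck is then exhausted of yet-unseen cards in the relevant count), so the inequality $b(k,t+1)\le c(k,t+1)$ still holds. A mild subtlety is that the two conditionings are on different $\sigma$-algebras ($y_{\le t-1}$ versus $z_{\le t-1}$), but property \ref{property-b} of Lemma \ref{z-construction} makes the two conditional expectations of $y_t$ coincide, so the comparison of conditional expectations needed to apply \ref{property-d} is legitimate. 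Once these edge cases are dispatched, the induction closes and, taking $t=mn+1$ and summing over $k$ via $\sum_k b(k,mn+1) = \sum_i y_i$ and $\sum_k c(k,mn+1) = \sum_i z_i$, one concludes the comparison of payoffs needed downstream.
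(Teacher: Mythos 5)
Your overall strategy is the paper's: induct on $t$, reduce to the round with $g_t=k$, and in the case $b(k,t)=c(k,t)$ compare $\mathbb{E}\left(y_t\,\middle|\,g_{\le t},y_{\le t-1}\right)$ with $\mathbb{E}\left(z_t\,\middle|\,g_{\le t},z_{\le t-1}\right)$ via (\ref{3.1}) and property \ref{property-c} of Lemma \ref{z-construction}, then conclude $y_t\le z_t$ from property \ref{property-d}. That core step is correct and is exactly what the paper does (phrased there as a proof by contradiction rather than a direct case split, but the content is the same).

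The genuine gap is your treatment of the boundary case $a(g_t,t)\ge mn-Y$, which you yourself flag as the main obstacle. The justification you offer --- that $y_t$ is forced to $0$ because ``the deck is then exhausted of yet-unseen cards in the relevant count'' --- is not valid: $a(g_t,t)$ counts \emph{guesses}, not draws, so $a(g_t,t)$ being close to $mn$ does not by itself prevent a copy of the card $g_t$ from remaining in the deck (consider a Guesser who guesses label $k$ in every round). What actually closes this case is property \ref{property-a}, which you never invoke here: if $a(k,t)\ge mn-Y$, then \ref{property-a} forces $c(k,t)=m$, so either $b(k,t)<c(k,t)$ (your easy case) or $b(k,t)=c(k,t)=m$; in either situation $b(k,t+1)\le m=c(k,t+1)$ follows from (\ref{bound_m}) and the monotonicity of $c(k,\cdot)$, with no claim about $y_t$ needed at all. (The paper dispatches the same point in mirror image: in a putative violation one has $c(k,t+1)=b(k,t+1)-1\le m-1$, and property \ref{property-a} then forces $a(k,t)\le a(k,t+1)<mn-Y$, so the side conditions of (\ref{3.1}) and of \ref{property-c} are automatically satisfied.) With that step repaired, your induction closes and the rest of your argument stands.
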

\begin{proof}
We induct on $t$. The statement automatically holds for $t=1$. Suppose that the statement holds for $t$ ($1\le t\le mn$). For the sake of contradiction, suppose that $b(k,t+1)> c(k,t)$ and $\sum_{i=1}^{mn} y_i \le Y$.

Because $b(k,t)$ and $b(k,t+1)$ are integers with $$b(k,t+1)-b(k,t)\in \left\{0,1\right\},$$ and $c(k,t)$ and $c(k,t+1)$ are integers with $$c(k,t+1)-c(k,t)\in \left\{0,1\right\},$$
by inductive hypothesis, we have $$b(k,t+1)-1=b(k,t)=c(k,t)=c(k,t+1).$$
The equation $b(k,t+1)=b(k,t)+1$ implies that $g_t=k$ and $y_t=1$. Then the equation $c(k,t)=c(k,t+1)$ implies that $z_t=0$. 

By (\ref{bound_m}), we have $b(k,t+1)\le m$, so $c(k,t+1)\le m-1$. Thus by the condition \ref{property-a} in Lemma \ref{z-construction}, we have $a(k,t)\le a(k,t+1) < mn -Y$. By the condition \ref{property-c} in Lemma \ref{z-construction}, we have $$\mathbb{E}\left(z_t\;\middle|\; g_{\le t}, z_{\le t-1}\right) = \frac{m-c(g_t,t)}{mn-a(g_t,t)-Y}=\frac{m-b(g_t,t)}{mn-a(g_t,t)-Y}.$$
By $\sum_{i=1}^{mn} y_i\le Y$ and (\ref{3.1}), we have $$\mathbb{E}\left(y_{t}\; \middle|\;  g_{\le t}, y_{\le t-1}\right)\le \mathbb{E}\left(z_{t}\; \middle|\;  g_{\le t}, z_{\le t-1}\right).$$
By the condition  \ref{property-d} in Lemma \ref{z-construction}, we have $y_t\le z_t$ almost surely, which is not compatible with the conditions $y_t=1$ and $z_t=0$. Therefore the statement holds for $t+1$.

By the principle of mathematical induction, the statement hold for every $t$.
\end{proof}

Finally, we prove our main result by combining Lemma \ref{bound-z-all}, Lemma \ref{compare} and \cite[Lemma 3.8]{diaconis2022card}.

\begin{proof}[Proof of Theorem \ref{main-thm}]
    Because $n\ge 1200\sqrt{m}$, by \cite[Lemma 3.8]{diaconis2022card}, we have 
\begin{equation}
    \mathbb{P}\left(\sum_{i=1}^{m n} y_i>Y\right)\le 2 e^{-\sqrt{m} n/72}\le \frac{144}{\sqrt{m}n}.
\end{equation}
    By Lemma \ref{bound-z-all} and Lemma \ref{compare}, we have
    \begin{align*}
        &\sum_{i=1}^{mn}\mathbb{E}\left(y_i\right)\\
        =&\sum_{i=1}^{mn}\mathbb{E}\left(y_i\;\mathds{1}\left(\sum_{i=1}^{mn}y_i\le Y \right)\right) +\sum_{i=1}^{mn}\mathbb{E}\left(y_i\;\mathds{1}\left(\sum_{i=1}^{mn}y_i>Y \right)\right)\\
        \le &\sum_{i=1}^{mn}\mathbb{E}\left(y_i\;\mathds{1}\left(\sum_{i=1}^{mn}y_i\le Y \right)\right) +mn \;\mathbb{P}\left(\sum_{i=1}^{mn}y_i>Y\right)\\
        \le &\sum_{i=1}^{mn}\mathbb{E}\left(y_i\;\mathds{1}\left(\sum_{i=1}^{mn}y_i\le Y \right)\right) +144\sqrt{m}\\
        =&\sum_{i=1}^{mn}\sum_{k=1}^n\mathbb{E}\left(b(k,mn+1)\;\mathds{1}\left(\sum_{i=1}^{mn}y_i\le Y \right)\right)+144\sqrt{m}\\
        \le &\sum_{i=1}^{mn}\sum_{k=1}^n\mathbb{E}\left(c(k,mn+1)\;\mathds{1}\left(\sum_{i=1}^{mn}y_i\le Y \right)\right)+144\sqrt{m}\\
        =&\sum_{i=1}^{mn}\mathbb{E}\left(z_i\;\mathds{1}\left(\sum_{i=1}^{mn}y_i\le Y \right)\right)+144\sqrt{m}\\
        \le&\sum_{i=1}^{mn}\mathbb{E}\left(z_i\right)+144\sqrt{m}\\
        \le&\left|\sum_{i=1}^{mn}\mathbb{E}\left(z_i\right)-m\right|+m+144\sqrt{m}\\
        \le& m+500\sqrt{m}.
    \end{align*}
\end{proof}

\end{document}